\title{Noetherianity and Rooted Trees}
\author{Daniel Barter}
\date{\today}
\def\@endtheorem{\endtrivlist}
\theoremstyle{plain}
\newtheorem{Theorem}{Theorem}
\newtheorem{Corollary}[Theorem]{Corollary}
\theoremstyle{definition}
\newtheorem{Definition}[Theorem]{Definition}
\newtheorem{Lemma}[Theorem]{Lemma}
\newtheorem{Example}[Theorem]{Example}
\DeclareMathOperator{\Mor}{Mor}
\DeclareMathOperator{\In}{in}
\DeclareMathOperator{\Rep}{Rep}
\newcommand{\TT}{{\bf T}}
\newcommand{\FTT}{{\bf FT}}
\newcommand{\FPT}{{\bf FPT}}
\newcommand{\PT}{{\bf PT}}
\newcommand{\Vect}{{\bf Vec}}
\newcommand{\CC}{{\bf C}}
\newcommand{\FI}{{\bf FI}}
\newcommand{\FS}{{\bf FS}}
\newcommand{\up}[1]{\underset{\mbox{\tiny #1}}{(}}
\newcommand{\down}[1]{\underset{\mbox{\tiny #1}}{)}}
\begin{document}

\begin{abstract}
Let \( \TT \) be the category whose 
objects are rooted trees and morphisms 
are order embeddings preserving the 
root. We prove that finitely generated 
representations of \( \TT \) are
Noetherian using techniques developed 
by Sam and Snowden which generalize 
classical Gr\"{o}bner theory. The proof 
uses a relative version of Kruskal's 
tree Theorem.
\end{abstract}

\maketitle

\section{Introduction} \label{sec:introduction}

Let \( \CC \) be a category and \( 
\Vect \) the category of vector spaces 
over a field \( k \) with arbitrary 
characteristic. Write \( 
\Rep(\CC) \) for the category of 
functors \( \CC \to \Vect \). Such 
functors are called 
{\bf representations} of \( \CC \). Let 
\( \TT \) be the 
category whose objects are 
rooted trees 
and morphisms are order embeddings 
preserving the root. In this paper we 
shall prove
\begin{Theorem} 
\label{T_is_noetherian}
Finitely generated \( \TT 
\)-representations are Noetherian.
\end{Theorem}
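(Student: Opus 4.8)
The plan is to realize \( \TT \) as a \emph{quasi-Gr\"{o}bner} category in the sense of Sam and Snowden and then invoke their theorem that finitely generated representations of a quasi-Gr\"{o}bner category are Noetherian. Concretely, it suffices to produce a Gr\"{o}bner category together with a functor \( \Phi \colon \PT \to \TT \) that is essentially surjective and satisfies property (F); Noetherianity of \( \Rep(\TT) \) then follows formally. I would first record the two elementary finiteness facts on which any such argument rests: between two finite rooted trees there are only finitely many root-preserving order embeddings, so \( \TT \) has finite morphism sets; and every non-invertible morphism strictly increases the number of vertices, so \( \TT \) is \emph{directed}. These observations reduce the whole question to a statement about a single poset attached to each object.

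For the Gr\"{o}bner category I would use the category \( \PT \) of planar rooted trees, whose morphisms are root-preserving order embeddings that additionally respect the planar, left-to-right ordering of the children at each vertex. The point of passing to \( \PT \) is that the planar structure equips the vertex set of each tree with a canonical total order, and hence equips, for each object \( T \), the set \( |\PT_T| \) of morphisms out of \( T \) with a lexicographic term order. I would verify that this order is a well-order compatible with post-composition, i.e.\ an \emph{admissible order} in the required sense, so that the theory of initial submodules and leading terms applies to the principal projective generated by \( T \). The detour through \( \PT \) is forced precisely because an unordered rooted tree carries no canonical total order on its vertices, so no admissible order exists directly on \( |\TT_T| \).

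The heart of the argument, and the step I expect to be the main obstacle, is showing that for every object \( T \) the poset \( |\PT_T| \) is a \emph{well-partial-order}. Since morphisms strictly increase size, the descending chain condition is automatic from directedness, so the real content is the absence of infinite antichains. An element of \( |\PT_T| \) is a planar tree \( S \) together with an embedded copy of \( T \), and comparability in the poset is exactly embeddability of such marked trees compatible with the marking. This is precisely a \emph{relative} (marked) form of Kruskal's tree theorem: classical Kruskal gives the well-quasi-ordering of finite trees under topological embedding, and I would upgrade it to the situation where the trees carry a distinguished embedded subtree \( T \) and where both the embedding and the marking must be respected. I expect to prove this by a minimal-bad-sequence (Nash-Williams) argument adapted to the marked setting, handling the sequences of children via Higman's lemma.

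Finally, having established that \( \PT \) is Gr\"{o}bner, I would check that the forgetful functor \( \Phi \colon \PT \to \TT \) discarding the planar ordering is essentially surjective, since every finite rooted tree admits at least one planar structure, and that it satisfies property (F): each rooted tree has only finitely many planarizations, and any root-preserving order embedding into an unordered target lifts, after choosing a planar structure on that target, through one of finitely many embeddings of planarized trees. Verifying property (F) thus amounts to the finiteness of planar refinements, which is routine. Assembling these pieces exhibits \( \TT \) as quasi-Gr\"{o}bner and yields Theorem~\ref{T_is_noetherian}.
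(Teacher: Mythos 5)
Your proposal follows essentially the same route as the paper: you exhibit \( \TT \) as quasi-Gr\"{o}bner via the planar category \( \PT \), prove the key well-quasi-ordering of each \( \PT_T \) by a Nash--Williams minimal bad sequence argument combined with Higman's lemma (the paper's relative Kruskal theorem), construct an admissible order from the planar structure (the paper realizes this with Catalan words), and verify essential surjectivity and property (F) of the forgetful functor using the finiteness of planarizations. This matches the paper's proof in both overall structure and every key step.
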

Theorem \ref{T_is_noetherian} is proved 
using Gr\"{o}bner categories, first 
defined by Richter in \cite{gR86}, 
and further developed by Sam and 
Snowden in 
\cite{SS2014}. Gr\"{o}bner categories 
reduce Noetherianity questions to 
combinatorial questions. In all 
examples so far, the combinatorial 
questions reduce to Higman's lemma, or 
some variant. For the category \( \TT 
\), the combinatorial question reduces 
to Kruskal's tree Theorem.

\subsection{Motivation and previous 
work}

Theorem \ref{T_is_noetherian} is a 
generalization of theorem 
\ref{FI_is_noetherian}, 
which was proved independently by 
Church, Ellenberg and Farb in 
\cite{TCJEBF} and by Snowden in
\cite{AS2013}:

\begin{Theorem} 
\label{FI_is_noetherian}
Let \( \FI \) be the category of finite 
sets with injections. Then finitely 
generated \( \FI \)-representations 
over a field of characteristic \( 0 \) are 
Noetherian.
\end{Theorem}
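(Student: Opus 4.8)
The plan is to deduce Theorem~\ref{FI_is_noetherian} from the Sam--Snowden theory of Gr\"{o}bner and quasi-Gr\"{o}bner categories \cite{SS2014}; this is the degenerate case of the argument that the rest of the paper carries out for \( \TT \), with Higman's lemma in place of Kruskal's tree Theorem. A representation of \( \FI \) is Noetherian exactly when each of its subrepresentations is finitely generated, so the theorem is the statement that \( \Rep(\FI) \) is \emph{locally Noetherian}: every finitely generated object is Noetherian. Conceptually the content is that submodules of the principal projectives \( P_m \), with \( P_m(S) = k[\Mor_{\FI}([m],S)] \), are finitely generated, since every finitely generated representation is a quotient of a finite direct sum of such \( P_m \) and Noetherian objects are closed under finite direct sums and quotients. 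The Sam--Snowden machinery establishes exactly this kind of finite generation via a Gr\"{o}bner-basis formalism on the morphisms of \( \FI \), reducing everything to a combinatorial well-partial-order statement.

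Next I would pass to the category \( \mathbf{OI} \) of totally ordered finite sets with order-preserving injections, together with the functor \( \pi \colon \mathbf{OI} \to \FI \) that is the identity on objects and forgets the ordering on morphisms. The advantage of \( \mathbf{OI} \) is that its hom-sets have no automorphisms: an order-preserving injection \( [m] \hookrightarrow [n] \) is determined by its image \( \{a_1 < \dots < a_m\} \subseteq [n] \). Recording the \( m+1 \) gaps (the number of non-image elements lying before \( a_1 \), between consecutive \( a_i \), and after \( a_m \)), I would identify the monomials of the principal projective over \( \mathbf{OI} \) generated at \( [m] \) with \( \mathbb{N}^{m+1} \); postcomposition by an order-preserving injection can only enlarge gaps, so the divisibility order on monomials is exactly the product partial order. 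Dickson's lemma---the variant of Higman's lemma relevant to \( \FI \)---says that \( \mathbb{N}^{m+1} \) has no infinite antichains, and fixing any admissible well-order refining the product order then exhibits \( \mathbf{OI} \) as a Gr\"{o}bner category in the sense of \cite{SS2014}. The general theorem of Sam and Snowden gives that \( \Rep(\mathbf{OI}) \) is locally Noetherian.

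To transfer Noetherianity from \( \mathbf{OI} \) to \( \FI \), I would check that \( \pi \) satisfies Sam and Snowden's property~(F): for each \( [n] \) there should be finitely many morphisms \( f_i \colon [n] \to \pi(c_i) \) in \( \FI \) through which every morphism out of \( [n] \) factors after applying \( \pi \). Here one may take \( c_i = [n] \) and let \( f_i \) run over the \( n! \) elements of \( \Aut_{\FI}([n]) \), since every injection \( [n] \hookrightarrow [n'] \) is an order-preserving injection precomposed with a permutation of \( [n] \); the finiteness of this automorphism group is exactly what property~(F) needs. Granting this, \( \FI \) is quasi-Gr\"{o}bner, and the transfer theorem of \cite{SS2014} promotes local Noetherianity of \( \Rep(\mathbf{OI}) \) to local Noetherianity of \( \Rep(\FI) \), which is the assertion of Theorem~\ref{FI_is_noetherian}. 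I would remark that this argument is insensitive to the characteristic of \( k \)---indeed it goes through over any Noetherian coefficient ring---so the hypothesis of characteristic zero only reflects the setting of the original proofs of Church--Ellenberg--Farb and Snowden.

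The step I expect to be the main obstacle is the passage from \( \mathbf{OI} \) to \( \FI \). The category \( \FI \) is not itself Gr\"{o}bner: its hom-sets carry the nontrivial automorphism groups \( \Aut_{\FI}([n]) \), which obstruct a naive leading-term calculus. The whole force of the method lies in isolating the symmetry in property~(F)---so that the genuinely combinatorial work (Dickson's lemma) can be done on the rigid category \( \mathbf{OI} \)---and then invoking the Sam--Snowden transfer theorem. Verifying property~(F) and confirming that it interacts correctly with finite generation is therefore the crux. This is the same difficulty, in embryonic form, that the body of the paper confronts when \( \mathbf{OI} \) and Dickson's lemma are replaced by a category of ordered rooted trees and Kruskal's tree Theorem in the proof of Theorem~\ref{T_is_noetherian}.
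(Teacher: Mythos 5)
Your proposal is correct, but it is worth being clear about what it is being compared to: the paper does not prove Theorem~\ref{FI_is_noetherian} at all. It is quoted as background and attributed to Church--Ellenberg--Farb \cite{TCJEBF} and Snowden \cite{AS2013}, whose original arguments are genuinely different from yours --- they work in characteristic \( 0 \) and proceed representation-theoretically (via stability properties of symmetric group representations, respectively via Noetherianity of twisted commutative algebras), which is precisely why the stated hypothesis on the characteristic appears. What you have written out instead is the Gr\"{o}bner-theoretic proof of Sam and Snowden \cite{SS2014}: replace \( \FI \) by the rigid category \( \mathbf{OI} \), identify the divisibility order on \( \mathbf{OI} \)-morphisms out of \( [m] \) with the product order on gap vectors in \( \mathbb{N}^{m+1} \) (Dickson's lemma, with lexicographic order on gap vectors serving as the admissible well-order --- your phrase ``fix any admissible well-order refining the product order'' quietly assumes existence, so you should exhibit lex and check compatibility with postcomposition, which is immediate since an order embedding \( h \) preserves the position of the first discrepancy), verify property~(F) for \( \mathbf{OI} \to \FI \) using the factorization of an arbitrary injection as an order-preserving injection following a permutation, and invoke the transfer theorem (Theorem~\ref{quasi_grobner_implies_noeth}). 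Each step checks out, including your observation that this route removes the characteristic-\( 0 \) hypothesis and works over any Noetherian coefficient ring. The trade-off between the two approaches is real: the original proofs give finer structural information about \( \FI \)-modules in characteristic \( 0 \) (and explain the hypothesis in the statement), while your argument is uniform in the coefficients and is exactly the degenerate case of the template this paper follows for \( \TT \), with \( \mathbf{OI} \), Dickson's lemma, and the permutation factorization playing the roles of \( \PT \), Theorem~\ref{thm:relative_kruskal}, and the plane-ordering count in the proof of property~(F) for \( J : \PT \to \TT \).
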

Theorem \ref{FI_is_noetherian} has the 
following Corollary, due to 
Church, Ellenberg and Farb in \cite{TCJEBF}:
\begin{Corollary} 
\label{cor:cohomology_of_configuration_spaces}
Let \( M \) be a manifold and \( S \) a 
finite set. Then \( S \mapsto C_S(M)= 
\{ \text{injections \( S \to M \)} \} \) 
is a functor from \( \FI^{\rm op} \) 
into the category of manifolds, and \( 
S \mapsto H^d(C_S(M),\mathbb{Q}) \) is 
a finitely generated \( \FI 
\)-representation.
\end{Corollary}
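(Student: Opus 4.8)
The plan is to handle the two assertions in turn; the functoriality is essentially immediate, while the finite generation is where Theorem \ref{FI_is_noetherian} does its work. Throughout I take the implicit standing hypotheses on \( M \) (connected, of dimension at least \( 2 \), with \( \dim_{\mathbb{Q}} H^\ast(M,\mathbb{Q}) < \infty \)) under which the statement is true. For the first claim, observe that \( C_S(M) \) is the complement in \( M^S \) of the fat diagonal, hence an open submanifold of \( M^S \). An injection \( f \colon S \to T \) in \( \FI \) yields the projection \( M^T \to M^S \) onto the \( f(S) \)-coordinates, which restricts to a smooth map \( f^\ast \colon C_T(M) \to C_S(M) \) sending an injection \( \phi \colon T \to M \) to \( \phi \circ f \); functoriality \( (g \circ f)^\ast = f^\ast \circ g^\ast \) is clear, so \( S \mapsto C_S(M) \) is a functor \( \FI^{\rm op} \to \mathbf{Man} \). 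Post-composing with the contravariant functor \( H^d(-,\mathbb{Q}) \) cancels the two contravariances, so \( S \mapsto H^d(C_S(M),\mathbb{Q}) \) is a covariant functor \( \FI \to \Vect \), i.e.\ an \( \FI \)-representation.

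For finite generation, the strategy is to build, naturally in \( S \), a spectral sequence converging to \( H^\ast(C_S(M),\mathbb{Q}) \) whose second page is a finitely generated \( \FI \)-module in each bidegree, and then to propagate finite generation through the sequence via Noetherianity. I would use the Leray spectral sequence of the open inclusion \( \iota_S \colon C_S(M) \hookrightarrow M^S \),
\[
E_2^{p,q}(S) = H^p\bigl(M^S,\, R^q(\iota_S)_\ast\, \mathbb{Q}\bigr) \;\Longrightarrow\; H^{p+q}(C_S(M),\mathbb{Q}).
\]
Since the commutative square relating the inclusions \( \iota_T, \iota_S \) along the projection \( M^T \to M^S \) induces a morphism of Leray spectral sequences, every page \( E_r^{p,q} \) is an \( \FI \)-representation and each differential \( d_r \) is a morphism of \( \FI \)-representations.

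The \emph{main obstacle}, and the step carrying the representation-theoretic content, is to show that each \( E_2^{p,q}(S) \) is a finitely generated \( \FI \)-module. Here I would invoke the explicit description of the \( E_2 \)-page (the Cohen--Taylor/Kriz model): \( E_2^{p,q}(S) \) is a subquotient of a finite sum of tensor products of copies of \( H^\ast(M,\mathbb{Q}) \) with exterior classes \( G_{ab} \) of degree \( \dim M - 1 \) indexed by pairs \( \{a,b\} \subseteq S \), modulo the Arnold relations. In a fixed total degree only boundedly many such classes can occur, so each generator has support of size bounded by a constant depending only on \( p+q \) and \( \dim M \). Thus \( E_2^{p,q} \) is a subquotient of a finite direct sum of \( \FI \)-representations generated in bounded degree, and Theorem \ref{FI_is_noetherian} (submodules and quotients of finitely generated representations are finitely generated) shows it is finitely generated.

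Granting this, the conclusion is formal. Each \( d_r \) is a map of finitely generated \( \FI \)-modules, so by Theorem \ref{FI_is_noetherian} its kernel is finitely generated and hence so is the subquotient \( E_{r+1}^{p,q} \); since the total degree \( p+q = d \) meets only finitely many bidegrees, the sequence degenerates at a finite page and each \( E_\infty^{p,d-p} \) is finitely generated. The abutment \( H^d(C_S(M),\mathbb{Q}) \) therefore carries a finite filtration with finitely generated associated graded pieces, and because finitely generated \( \FI \)-representations are closed under extensions, \( S \mapsto H^d(C_S(M),\mathbb{Q}) \) is finitely generated. The delicate input is entirely the bounded-support estimate on the \( E_2 \)-page; once that is in hand, Noetherianity handles the rest.
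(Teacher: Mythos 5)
The paper itself contains no proof of this Corollary: it is quoted from Church--Ellenberg--Farb \cite{TCJEBF} purely as motivation, so the only fair comparison is with the argument in that reference. Your proposal is, in outline, exactly that argument: functoriality of \( S \mapsto C_S(M) \) as a restriction of coordinate projections, the Leray spectral sequence of the open inclusion \( C_S(M) \hookrightarrow M^S \) viewed as a spectral sequence of \( \FI \)-representations, finite generation of the \( E_2 \) page from the bounded-support description of the diagonal classes, and propagation through the pages and the abutment filtration using Noetherianity (Theorem \ref{FI_is_noetherian}) together with closure of finite generation under extensions. The logical structure is sound, and you deploy Noetherianity in precisely the places where it is genuinely needed (passage to kernels and submodules; quotients are finitely generated for free).

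Two remarks. First, your standing hypotheses on \( M \) are not cosmetic: as literally stated in the paper the Corollary is false. For \( M = \mathbb{R} \) the space \( C_S(\mathbb{R}) \) has \( \norm{S}! \) contractible components, so \( \dim H^0(C_S(\mathbb{R}),\mathbb{Q}) = \norm{S}! \), which outgrows every polynomial; since a finitely generated \( \FI \)-representation is a quotient of a finite sum of principal projectives \( P_{[m]} \), whose dimensions grow polynomially, \( H^0(C_{-}(\mathbb{R}),\mathbb{Q}) \) cannot be finitely generated. So connectedness, \( \dim M \geq 2 \), and \( \dim_{\mathbb{Q}} H^\ast(M,\mathbb{Q}) < \infty \) (and, in the original argument of \cite{TCJEBF}, orientability of \( M \), which enters the description of the sheaves \( R^q(\iota_S)_\ast \mathbb{Q} \) supported on the diagonals) must be imposed for your \( E_2 \)-page estimate to go through. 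Second, in your key step, note that ``generated in bounded degree'' alone does not imply finite generation; it is the finite-dimensionality of \( H^\ast(M,\mathbb{Q}) \) that makes the relevant sum of induced modules finite in each bidegree, so that hypothesis is doing real work exactly where you placed it.
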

 We hope that Corollary 
\ref{cor:cohomology_of_configuration_spaces} 
convinces the reader that Theorems 
\ref{T_is_noetherian} and 
\ref{FI_is_noetherian} are interesting.  
Motivated by Theorem 
\ref{FI_is_noetherian}, Sam and Snowden 
developed the theory of Gr\"{o}bner 
categories in \cite{SS2014}. They 
proved
\begin{Theorem} 
\label{quasi_grobner_implies_noeth}
Let \( \CC \) be quasi-Gr\"{o}bner 
category. Then every finitely generated 
\( \CC \)-representation is Noetherian.
\end{Theorem}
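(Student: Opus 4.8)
The plan is to follow the Gröbner-theoretic strategy of Sam and Snowden: first establish the statement for \emph{Gröbner} categories by a leading-term argument, and then bootstrap to \emph{quasi-Gröbner} categories by transport along the defining functor. Recall that for an object \( x \) the principal projective \( P_x = k[\Mor(x,-)] \) has \( k \)-basis \( \Mor(x,y) \) in degree \( y \); write \( \norm{\CC_x} = \coprod_y \Mor(x,y) \) for the resulting set of ``monomials'', preordered by \( f \le f' \) iff \( f' = g f \) for some morphism \( g \) of \( \CC \). That \( \CC \) is Gröbner means in particular that each \( \norm{\CC_x} \) is a Noetherian poset (no infinite antichain and no infinite strictly descending chain) and carries an \textbf{admissible order}: a well-order refining the above partial order and stable under postcomposition. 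That \( \CC \) is quasi-Gröbner means there is an essentially surjective functor \( \Phi \colon \CC' \to \CC \) out of a Gröbner category \( \CC' \) satisfying property (F).

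First I would reduce to principal projectives: a finitely generated representation is a quotient of a finite direct sum \( \bigoplus_i P_{x_i} \), and Noetherian objects are closed under finite direct sums and quotients, so it suffices to prove that each \( P_x \) is a Noetherian object of \( \Rep(\CC) \), i.e. that its subrepresentations satisfy the ascending chain condition. In the Gröbner case I would then set up the dictionary between subrepresentations of \( P_x \) and combinatorics of \( \norm{\CC_x} \). Given \( M \subseteq P_x \), every nonzero element has a well-defined leading monomial for the admissible order, and these leading monomials span an initial submodule \( \init(M) \); since the admissible order is stable under postcomposition, the set of leading monomials is upward closed, so \( \init(M) \) is the \emph{monomial} submodule attached to a poset ideal of \( \norm{\CC_x} \). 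Because \( \norm{\CC_x} \) is Noetherian, every such ideal is finitely generated, so monomial submodules satisfy the ascending chain condition. Two standard leading-term lemmas then finish the case: (i) if \( M \subseteq M' \) and \( \init(M) = \init(M') \) then \( M = M' \), by well-ordered induction on leading monomials; and (ii) consequently, given any chain \( M_1 \subseteq M_2 \subseteq \cdots \), the chain \( \init(M_1) \subseteq \init(M_2) \subseteq \cdots \) stabilizes, whence by (i) the original chain stabilizes. Thus \( P_x \) is Noetherian and the theorem holds for Gröbner categories.

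Finally I would reduce the quasi-Gröbner case to the Gröbner case via the restriction functor \( \Phi^* \colon \Rep(\CC) \to \Rep(\CC') \). The two inputs are: property (F), which guarantees that \( \Phi^* \) carries finitely generated \( \CC \)-representations to finitely generated \( \CC' \)-representations; and essential surjectivity, which guarantees that an inclusion of subrepresentations \( N \subseteq M \) is an equality as soon as \( \Phi^* N = \Phi^* M \), since every object of \( \CC \) is isomorphic to one in the image of \( \Phi \). Given \( N \subseteq M \) with \( M \) finitely generated, \( \Phi^* M \) is finitely generated, and since \( \CC' \) is Gröbner (hence Noetherian by the previous step) the submodule \( \Phi^* N \) is finitely generated. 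Lifting finitely many generators of \( \Phi^* N \) back to \( N \) yields a finitely generated \( N' \subseteq N \) with \( \Phi^* N' = \Phi^* N = \Phi^* M \cap \Phi^* N \); essential surjectivity then forces \( N' = N \), so \( N \) is finitely generated and \( \CC \) is Noetherian.

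I expect the main obstacle to be the Gröbner case, specifically making the leading-term formalism precise. One must check that the admissible order interacts correctly with the \( \CC \)-action, so that applying a morphism \( g \) sends the leading monomial of \( v \in M \) to the leading monomial of \( g \cdot v \); this requires postcomposition to be order-preserving and sufficiently injective that leading terms do not collide, and it is exactly what makes \( \init(M) \) a genuine monomial submodule cut out by a poset ideal. The passage from ``\( \norm{\CC_x} \) is Noetherian'' to ``every monomial submodule is finitely generated'' is then a Dickson/Higman-type statement. By contrast the quasi-Gröbner reduction is comparatively formal, hinging only on the bookkeeping of property (F) together with essential surjectivity.
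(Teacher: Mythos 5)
This theorem is not proved in the paper at all — it is quoted from Sam and Snowden \cite{SS2014} as an imported result — and your argument is essentially the proof given in that reference: reduction to principal projectives \( P_x \), the initial-submodule/leading-term argument over the well-quasi-ordered monomial poset \( \norm{\CC_x} \) equipped with an admissible order for the Gr\"{o}bner case, and then the formal descent along an essentially surjective functor with property (F) for the quasi-Gr\"{o}bner case. Your reconstruction is correct, with the one caveat you yourself flag: the compatibility of the admissible order with postcomposition must be taken in the strict sense (\( f \prec f' \) implies \( gf \prec gf' \)), so that postcomposition cannot collide leading terms and \( \init(M) \) really is cut out by an upward-closed set of monomials.
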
 
Sam and Snowden also proved that the 
categories \( \FI_d, \FS^{\rm op},{\bf 
VA}, \FI_G, \FS^{\rm op}_G \) are 
quasi-Gr\"{o}bner. In all of these 
examples, the objects are 
parameterized by the natural numbers. 
The category \( \TT \) is the first 
known example of a quasi-Gr\"{o}bner 
category whose objects do not have a 
natural bijection with \( \mathbb{N}^p  
\).

\subsection{Open problems} 
This paper raises several questions:
\begin{enumerate}
\item Are there any interesting spaces 
which are acted upon by tree 
automorphism groups? If we could find 
non trivial functors from \( \TT^{\rm 
op} \) into the category of spaces, 
then Theorem \ref{T_is_noetherian} 
might imply results like Corollary 
\ref{cor:cohomology_of_configuration_spaces}.
\item If \( V \) is a finitely 
generated \( \TT \)-representation, 
what can one say about the function \( 
T \mapsto \dim V_T \)? If \( \CC \) is 
a quasi-Gr\"{o}bner
category, then it is reasonable to
expect that the Hilbert series of
finitely generated \( \CC
\)-representations will be nice. For
example, finitely generated \( \FI
\)-representations have rational
Hilbert series.
\item Kruskal's tree Theorem is an 
important
part of the graph minor Theorem. The
category \( \TT \) is quasi-Gr\"{o}bner
because of Kruskal's tree Theorem. Is
there any category which is
quasi-Gr\"{o}bner because of the graph
minor Theorem? 
\end{enumerate}

\subsection{Acknowledgments}
I want to thank John Wiltshire-Gordon 
and Andrew Snowden for many useful 
discussions. This work would not have 
been possible without them. I want to 
thank Steven Sam for expressing 
interest in the categories \( \TT \) 
and \( \PT \) from an early stage, and 
suggesting ideas for future work.

\section{Rooted Trees} 
\label{subsec:rooted_trees}

In this section, we explain the terms 
and notation used throughout the 
paper. A {\bf tree} is a connected 
finite graph with no loops. A {\bf 
rooted tree} is a tree equipped 
with a root vertex.
In a rooted tree, we orient every edge 
towards the root vertex. When drawing 
rooted trees, the root vertex is at 
the bottom. Here is an example:
\[
\begin{tikzpicture}[
    level 1/.style={sibling distance=4em},
    level 2/.style={sibling distance=2.5em}, level distance=0.6cm,
    level 3/.style={sibling 
distance=1em}, level distance=0.6cm
    ]
\coordinate (root) {} [fill] circle (2pt) [grow=up]
child  { [fill] circle (2pt)
           child { [fill] circle (2pt)
                      child { [fill] circle (2pt)
                                 child { [fill] circle (2pt) }
                                 child { [fill] circle (2pt)
                                            child { [fill] circle (2pt) }
                                          }
                                 child { [fill] circle (2pt) }
                               }
                    }
           child { [fill] circle (2pt)
                      child { [fill] circle (2pt) }
                      child { [fill] circle (2pt)
                               child { [fill] circle (2pt) }
                               child { [fill] circle (2pt)}
                               child { [fill] circle (2pt) }
                               }
                    }
         }
child { [fill] circle (2pt)
           child { [fill] circle (2pt) }
           child { [fill] circle (2pt)
                      child { [fill] circle (2pt) }
                    }
         }
;
\end{tikzpicture}
\]
If \( v \) is a vertex, write \( 
\In(v) \) for the set of incoming 
edges. When we draw a picture of a 
rooted tree, we implicitly put an 
ordering on \( \In(v) \) for each 
vertex \( v \). A {\bf planar rooted 
tree} is a rooted tree equipped with a 
total ordering on \( \In(v) \) for 
each vertex \( v \). Given a rooted 
tree \( T \) we can build a partially 
ordered set as follows: The 
elements are vertices and the 
relations are generated by the edges pointing 
towards the root vertex. In other 
words, given vertices \( v,w \) we say 
that \( v \leq w \) if there is a 
downward path from \( v \) to \( w \). 
We call this order the {\bf tree order} on the 
vertices of \( T \). The root vertex 
is larger than all other vertices in 
the tree order. Let \( T \) be a planar 
rooted tree. We can totally order the 
vertices using a clockwise depth-first 
tree walk. This total ordering will be 
called the {\bf depth-first ordering} 
on the vertices and is denoted by \( 
\triangleleft \). When we say order 
embedding, we mean with respect to the 
tree order. 
\begin{align*}
\FTT &= \left\{ \parbox{8cm}{Objects 
are rooted trees and morphisms are 
order embeddings} 
\right\} \\ \\
\FPT &= \left\{ \parbox{8cm}{Objects 
are planar rooted trees and morphisms 
are order embeddings which also 
preserving the depth-first ordering on
  vertices} \right\} \\ \\
\TT &= \left\{ \parbox{8cm}{Objects 
are rooted trees and morphisms are 
order embeddings preserving the 
root} \right\}  \\ \\
\PT &= \left \{ \parbox{8cm}{Objects 
are planar rooted trees and morphisms 
are order embeddings that 
preserve the root and the depth-first 
ordering on vertices} \right\}
\end{align*}
The categories \( \TT \) and \( \PT \) 
are our main focus, but for many of 
the proofs, it is useful to work in \( 
\FTT \) and \( \FPT \). The 
morphisms in each of the above four 
categories must be injective on 
vertices. We can now state our main 
Theorem, from which Theorem 
\ref{T_is_noetherian} follows.
\begin{Theorem} \label{thm:main_theorem}
The category \( \PT \) is Gr\"{o}bner 
and the forgetful functor \( \PT
\to \TT \) is essentially surjective 
and has property (F).
\end{Theorem}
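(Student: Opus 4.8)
The plan is to split the statement into its three assertions, treating the Gröbner property of \( \PT \) as the substantial one. Following Sam and Snowden, a directed category \( \CC \) is Gröbner once, for every object \( x \), the poset \( |\CC_x| \) of morphisms out of \( x \) (taken up to isomorphism under \( x \)) is both Noetherian and carries a compatible admissible order. So I would first check that \( \PT \) is directed: every morphism is injective on vertices, so an endomorphism of a finite planar rooted tree \( T \) is a bijection of \( V(T) \) preserving the depth-first order \( \triangleleft \), hence the identity. Thus \( \Aut_{\PT}(T)=\{\mathrm{id}\} \) and in fact \( \mathrm{End}_{\PT}(T)=\{\mathrm{id}\} \); this is precisely the role the planar/depth-first structure plays, mirroring how totally ordered sets rigidify \( \FI \) into \( \mathrm{OI} \).

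Next, for a fixed planar rooted tree \( S \), I would give a combinatorial model of \( |\PT_S| \): an object is a planar rooted tree \( T \) equipped with a \( \PT \)-embedding \( f\colon S \hookrightarrow T \), ordered by \( (T,f)\le(T',f') \) iff some \( \PT \)-morphism \( g\colon T\to T' \) satisfies \( gf=f' \). Reading off the vertices of \( T \) in depth-first order \( \triangleleft \) records \( T \) as a word whose letters carry the nesting (tree-order) data, and I would further decorate each letter with a label in \( V(S)\sqcup\{\ast\} \) marking whether the vertex lies in \( f(S) \) and, if so, which vertex of \( S \) it is. Since distinct vertices of \( S \) receive distinct labels, a label-, \( \triangleleft \)-, and tree-order-preserving embedding of one such decorated word into another is exactly an \( S \)-compatible \( \PT \)-morphism, so the poset order on \( |\PT_S| \) is identified with a planar, \emph{finitely} labeled tree-embedding order. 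The admissible order demanded by Sam and Snowden is then obtained by well-ordering the (countable) letter set and comparing decorated depth-first words lexicographically.

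The heart of the argument, and the step I expect to be hardest, is Noetherianity of this order: I must prove a \emph{relative}, \emph{planar} form of Kruskal's tree Theorem stating that finite planar rooted trees whose vertices are labeled from a fixed finite set are well-quasi-ordered under order-embeddings that respect \( \triangleleft \) and preserve labels. I would establish this by a minimal-bad-sequence argument in the style of Nash-Williams, adapted from the usual (non-planar, unlabeled) Kruskal Theorem: Higman's lemma controls the left-to-right sequences of subtrees hanging off each vertex, the finite labels are carried along unchanged, and the marked copy of \( S \) is encoded precisely by its distinct \( V(S) \)-labels, which is where the word \emph{relative} enters. Checking that this labeled planar embedding order reproduces \( |\PT_S| \) exactly, neither coarser nor finer than the categorical order, is the delicate bookkeeping I would watch most carefully. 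With directedness, the admissible order, and this Noetherianity in hand, \( \PT \) is Gröbner.

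Finally, the forgetful functor \( \Phi\colon \PT \to \TT \) is essentially surjective because every rooted tree admits a planarization (choose a total order on \( \In(v) \) at each vertex). For property (F), given a rooted tree \( S \) I would take the finitely many planarizations \( P_1,\dots,P_m \) of \( S \), each with the tautological map \( \mathrm{id}\colon S \to \Phi(P_i)=S \). Any pair \( (P,f) \) with \( f\colon S \to \Phi(P) \) a root-preserving order embedding realizes \( f(S) \) as a sub-rooted-tree of \( P \); restricting \( P \)'s planar orders to \( f(S) \) makes \( f \) an isomorphism onto some \( P_i \), and the inclusion \( f(S)\hookrightarrow P \) is a \( \PT \)-morphism because the restriction of the depth-first order of \( P \) to \( f(S) \) is again a depth-first order on \( f(S) \). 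This furnishes the required \( g\colon P_i \to P \) with \( \Phi(g)\circ\mathrm{id}=f \), establishing property (F) and completing the reduction of Theorem~\ref{T_is_noetherian} to Theorem~\ref{quasi_grobner_implies_noeth}.
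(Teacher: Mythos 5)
Your proposal is correct in outline, and its handling of essential surjectivity and property (F) is the same as the paper's: the paper simply records the decomposition \( \TT(U,J(V)) = \bigsqcup_i \PT(U_i,V) \) over the finitely many planarizations \( U_i \) of \( U \), which is exactly your factorization argument (your verification that the pulled-back planar structure on \( f(S) \) is again a planar structure whose depth-first order restricts correctly is the content the paper leaves implicit). The genuine divergence is in the core combinatorial step. The paper proves Theorem \ref{thm:relative_kruskal} by induction on the number of vertices of the source tree: base cases \( \bullet \) and \( B_n \) (Lemmas \ref{lem:T=single_vertex} and \ref{lem:T=Bn}, the latter with a pigeonhole on how distinguished vertices distribute among root branches and a case split on whether a distinguished vertex sits at a branch root), and an inductive step that splits the source at a vertex of valence \( \geq 2 \) using Lemma \ref{lem:building_objects_from_colimits}. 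You instead encode an element \( f \colon S \to T \) of \( \PT_S \) as \( T \) with vertices labeled in the finite set \( V(S)\sqcup\{\ast\} \), identify the divisibility order with the label-preserving planar embedding order on the image of this encoding, and prove a single labeled planar Kruskal theorem by one Nash--Williams argument. This is a genuinely different and arguably more uniform route: it eliminates the induction on \( S \) and its separate base cases, at the price of proving a statement about all labeled trees rather than one intrinsic to the category. A pleasant feature is that root preservation comes for free on the encoding image, since the root of any encoded tree is the unique vertex labeled \( \mathrm{root}(S) \).

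Two caveats, the first of which is exactly where the paper's proof does its hardest work and which your sketch glosses over. Your labeled wqo theorem must be stated for embeddings that are \emph{not} required to preserve the root (your wording is consistent with this, but you should say so explicitly). With that formulation the minimal-bad-sequence argument is clean: the inclusion of a root branch \( U_j \hookrightarrow T_{n(j)} \) is itself a label-preserving morphism, which is what the badness-of-the-hybrid-sequence step needs, and root preservation re-enters only on the encoding image via labels. If instead you demand root-preserving embeddings in the labeled theorem, the branch inclusions are no longer morphisms, and the standard repair --- the paper's trick of rerouting the branch root to the ambient root --- conflicts with label preservation whenever the branch root carries a label; this tension is precisely why Lemma \ref{lem:T=Bn} needs its case analysis. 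So the ``delicate bookkeeping'' is not in the encoding (which is fine) but in which morphisms the bad-sequence argument is run over. Second, your admissible order is defective as stated: plain lexicographic comparison of words over a well-ordered alphabet is not a well-order (over \( a \prec b \) one has \( b \succ ab \succ aab \succ \cdots \)), and admissible orders must be well-orders. You need the length-lexicographic refinement, which is what the paper's Catalan-word order uses, and you must also check compatibility with post-composition; this holds for your decorated words because a morphism preserves \( \triangleleft \), so the first discrepancy between \( gf \) and \( gf' \) occurs at the \( g \)-image of the first discrepancy between \( f \) and \( f' \) --- this is the paper's Lemma \ref{lem:catalan_words_are_admissible}.
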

Theorem \ref{thm:main_theorem} says 
that \( \TT \) is quasi-Gr\"{o}bner. We 
refer the reader to \cite{SS2014}  
where the theory of Gr\"{o}bner  
categories is developed.

\section{A relative version of Kruskal's tree
  theorem} \label{sec:relative_version_kruskal}

We define a sequence of trees \(
B_1,B_2,B_3,B_4,\dots\) as follows: \( 
B_n \) is the graph with vertex set \( 
\{ * \} \cup \{ 1,\dots,n \} \) and 
edges \( (i,*) \). Diagrammatically, we 
have
\[
\begin{tikzpicture}[baseline={([yshift=-.5ex]current bounding box.center)},
    level 1/.style={sibling distance=1em},
    level 2/.style={sibling distance=2.5em}, level distance=0.6cm
    ]
\coordinate (root) {} [fill] circle (2pt) [grow=up]
child {[fill] circle (2pt)};
\end{tikzpicture}
\quad
\begin{tikzpicture}[baseline={([yshift=-.5ex]current bounding box.center)},
    level 1/.style={sibling distance=1em},
    level 2/.style={sibling distance=2.5em}, level distance=0.6cm
    ]
\coordinate (root) {} [fill] circle (2pt) [grow=up]
child {[fill] circle (2pt)}
child {[fill] circle (2pt)};
\end{tikzpicture}
\quad
\begin{tikzpicture}[baseline={([yshift=-.5ex]current bounding box.center)},
    level 1/.style={sibling distance=1em},
    level 2/.style={sibling distance=2.5em}, level distance=0.6cm,
    ]
\coordinate (root) {} [fill] circle (2pt) [grow=up]
child {[fill] circle (2pt)}
child {[fill] circle (2pt)}
child {[fill] circle (2pt)};
\end{tikzpicture}
\quad
\begin{tikzpicture}[baseline={([yshift=-.5ex]current bounding box.center)},
    level 1/.style={sibling distance=1em},
    level 2/.style={sibling distance=2.5em}, level distance=0.6cm,
    ]
\coordinate (root) {} [fill] circle (2pt) [grow=up]
child {[fill] circle (2pt)}
child {[fill] circle (2pt)}
child {[fill] circle (2pt)}
child {[fill] circle (2pt)};
\end{tikzpicture}
\cdots
\]
These planar rooted trees form building blocks in the category \( \FPT
\).

\begin{Lemma} \label{lem:building_objects_from_colimits}
Let \( T \) be a planar rooted tree. Let \( v \) be a vertex of \( T \). Let \( T_v \) be the sub tree of \( T \) which contains everything above and including \( v \). Let \( T^v \) be the sub tree of \( T \) obtained by removing everything in \( T_v \) strictly above \( v \). Then we have the following pushout square in \( \FPT \):
\[
\begin{tikzcd}
T_v \arrow{r} & T \\
v \arrow{u} \arrow{r} & T^v \arrow{u}
\end{tikzcd}
\]
Here is an example of such a pushout square:
\[
\begin{tikzcd}
\begin{tikzpicture}[baseline={([yshift=-.5ex]current bounding box.center)},
    level 1/.style={sibling distance=1em},
    level 2/.style={sibling distance=1em}, level distance=0.6cm,
    ]
\coordinate (root) {} [fill] circle (3.5pt) [grow=up]
child {  [fill] circle (2pt)
         }
child { [fill] circle (2pt)
          child { [fill] circle (2pt)}
          child { [fill] circle (2pt)}
         };
\end{tikzpicture} \arrow{r} &
\begin{tikzpicture}[baseline={([yshift=-.5ex]current bounding box.center)},
    level 1/.style={sibling distance=1em},
    level 2/.style={sibling distance=1em}, level distance=0.6cm,
    level 3/.style={sibling distance=1em}, level distance=0.6cm,
    level 4/.style={sibling distance=1em}, level distance=0.6cm,
    ]
\coordinate (root) {} [fill] circle (2pt) [grow=up]
child {[fill] circle (2pt)}
child {[fill] circle (2pt)}
child {[fill] circle (2pt)}
child {[fill] circle (2pt)
          child {  [fill] circle (2pt)
                   }
          child { [fill] circle (2pt)
                    child { [fill] circle (2pt)}
                    child { [fill] circle (2pt)}
                   }
        };
\end{tikzpicture} \\
\begin{tikzpicture}[baseline={([yshift=-.5ex]current bounding box.center)},
    level 1/.style={sibling distance=1em},
    level 2/.style={sibling distance=2.5em}, level distance=0.6cm,
    ]
\coordinate (root) {} [fill] circle (3.5pt) [grow=up]
;
\end{tikzpicture} \arrow{u} \arrow{r} &
\begin{tikzpicture}[baseline={([yshift=-.5ex]current bounding box.center)},
    level 1/.style={sibling distance=1em},
    level 2/.style={sibling distance=2.5em}, level distance=0.6cm,
    ]
\coordinate (root) {} [fill] circle (2pt) [grow=up]
child {[fill] circle (2pt)}
child {[fill] circle (2pt)}
child {[fill] circle (2pt)}
child {[fill] circle (3.5pt)};
\end{tikzpicture}
\arrow{u}
\end{tikzcd}
\]
\end{Lemma}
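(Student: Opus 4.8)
The plan is to verify the universal property of the pushout directly on vertex sets, using that $T_v$ and $T^v$ meet in exactly the vertex $v$. First I would record the combinatorial decomposition behind the square: in the tree order, $T_v$ is the principal ideal $\{u : u \le v\}$ (with $v$ as its root), and $T^v$ is the complement of the strict descendants of $v$, namely $\{u : u \not< v\}$ (in which $v$ is now a leaf). Hence the vertex sets satisfy $V(T_v) \cap V(T^v) = \{v\}$ and $V(T_v) \cup V(T^v) = V(T)$, the four displayed arrows are the evident order embeddings, and the square commutes because both composites send $v$ to $v \in T$. Since the vertex set of $T$ is covered by those of $T_v$ and $T^v$ with overlap only $\{v\}$, any morphism out of $T$ restricting to prescribed maps on the two pieces is already determined on vertices; this yields uniqueness in the universal property for free.

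Next, given a planar rooted tree $S$ and morphisms $\alpha \colon T_v \to S$, $\beta \colon T^v \to S$ that agree on $v$, I set $v' = \alpha(v) = \beta(v)$ and define $\gamma \colon T \to S$ by $\gamma|_{T_v} = \alpha$ and $\gamma|_{T^v} = \beta$; this is well defined on vertices because the two restrictions agree on the overlap $\{v\}$. The real work is to show that $\gamma$ is a morphism of $\FPT$. The key structural fact I would exploit is that $S$ is a tree, so the set of ancestors of any vertex (the elements above it in the tree order) forms a chain. The vertex $v'$ then plays a separating role inherited from $v$: for $x < v$ in $T_v$ one has $\alpha(x) < v'$, whereas for $y \in T^v \setminus \{v\}$ the relation $y \not\le v$ and the fact that $\beta$ is an order embedding give $\beta(y) \not\le v'$.

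From these two observations the remaining checks fall out, and I would organize them by whether the two arguments lie in the same piece or straddle the seam. Injectivity is clear within $T_v$ or within $T^v$, and in the mixed case $\alpha(x) \le v'$ while $\beta(y) \not\le v'$, so the images differ. For the order-embedding property the only interesting case is again $x < v$ and $y \in T^v \setminus \{v\}$, where in $T$ one has $x \le y \iff v \le y$; on the $S$ side, $v'$ and $\beta(y)$ both lie above $\alpha(x)$ and are therefore comparable by the chain property, and $\beta(y) \not\le v'$ forces the comparison to go $v' \le \beta(y)$, matching $v \le y$. I expect the genuine obstacle to be the verification that $\gamma$ also preserves the depth-first total order $\triangleleft$, since this is where the planar structure enters: one must argue that the clockwise depth-first walk of $T$ restricts compatibly to $T_v$ and $T^v$ and is reassembled by $\alpha$ and $\beta$, which I would once more reduce to a comparison through the common value $v'$. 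Once all cases are dispatched, $\gamma$ is the required unique morphism and the square is a pushout.
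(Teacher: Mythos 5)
Your gluing strategy is essentially the same one the paper uses (the paper phrases it with edges: every edge of \( T \) lies in exactly one of \( T_v \), \( T^v \), so a morphism out of \( T \), viewed as an edge-to-path assignment, is the same thing as a compatible pair of morphisms out of the two pieces), and your treatment of the poset-level conditions --- injectivity, order preservation, and order reflection via the chain of ancestors of a vertex in \( S \) --- is correct, and indeed more careful than the paper's one-line proof. The genuine gap is the step you yourself flag and then leave as a sketch: preservation of the depth-first order \( \triangleleft \). This is not a routine leftover; it is the only place where planarity enters (the lemma is about \( \FPT \), not about posets), and your proposed reduction, a comparison through the common value \( v' \), closes only half of it. Concretely, the depth-first order of \( T \) is that of \( T^v \) with the block \( T_v \setminus \{v\} \) inserted immediately after \( v \), so for mixed pairs \( x \in T_v \setminus \{v\} \), \( u \in T^v \setminus \{v\} \) you must show: (i) if \( u \triangleleft v \) then \( \beta(u) \triangleleft \alpha(x) \), and (ii) if \( v \triangleleft u \) then \( \alpha(x) \triangleleft \beta(u) \). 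Case (i) does follow by comparison through \( v' \): \( \beta(u) \triangleleft v' \triangleleft \alpha(x) \), using that \( v \) is the \( \triangleleft \)-least vertex of \( T_v \) and that \( \alpha \) preserves \( \triangleleft \). But in case (ii) all that comparison through \( v' \) yields is \( v' \triangleleft \alpha(x) \) and \( v' \triangleleft \beta(u) \); both images lie after \( v' \), and transitivity says nothing about their relative order.

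To close case (ii) you need the contiguity property of depth-first walks: in any planar rooted tree \( S \), the set \( \{ s : s \leq v' \} \) of vertices weakly above \( v' \) forms an interval in the depth-first order of \( S \) whose first element is \( v' \). Granting this, \( \alpha(x) \) lies in that interval (since \( \alpha(x) < v' \)), while \( \beta(u) \) does not (if \( \beta(u) \leq v' \) then \( u \leq v \) by order reflection, contradicting \( u \in T^v \setminus \{v\} \)); since \( v' \triangleleft \beta(u) \), the vertex \( \beta(u) \) must come after the whole interval, hence \( \alpha(x) \triangleleft \beta(u) \). This contiguity statement is the missing idea: without it the universal property is not established, and with it stated and proved your argument becomes a complete proof --- one that is, for what it is worth, substantially more detailed than the argument the paper itself gives.
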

\begin{proof}
To define a morphism \( T \to U \), we need to send edges in \( T \) to paths in \( U \) so that domains and codomains are preserved. Since every edge in \( T \) is contained in either \( T_v \) or \( T^v \), the lemma follows.
\end{proof}
\begin{Lemma} \label{lem:breaking_a_tree_up_at_the_root}
Assume that \( T \) is a planar rooted tree and \( v_1,\dots,v_n \) are the vertices with distance 1 from the root. Then \( T \) is a colimit of the following diagram (that we have only drawn for \( n =
3) \):
\[
\begin{tikzcd}
T_{v_1} & T_{v_2} & T_{v_3} & B_3 \\
v_1 \arrow{u} \arrow{urrr} & v_2  \arrow{u} \arrow{urr} & v_3
\arrow{u} \arrow{ur}
\end{tikzcd}
\]
\end{Lemma}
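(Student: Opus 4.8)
The plan is to realize \( T \) as an iterated pushout, applying Lemma \ref{lem:building_objects_from_colimits} once at each child \( v_i \) of the root, and then to invoke the standard categorical fact that a composite of pushout squares computes the colimit of the combined diagram.

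First I would apply Lemma \ref{lem:building_objects_from_colimits} at \( v = v_1 \), expressing \( T \) as the pushout of the span \( T_{v_1} \leftarrow v_1 \rightarrow T^{v_1} \), where \( T^{v_1} \) is \( T \) with the subtree strictly above \( v_1 \) deleted, so that \( v_1 \) becomes a leaf. The tree \( T^{v_1} \) still carries the vertex \( v_2 \) with the full subtree \( T_{v_2} = (T^{v_1})_{v_2} \) sitting above it, so I can apply the lemma again inside \( T^{v_1} \) at \( v_2 \), and so on. After removing the subtrees above all of \( v_1,\dots,v_n \), the remaining tree is exactly the corolla \( B_n \), namely the root together with its \( n \) children as leaves. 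Pasting these \( n \) pushout squares shows that \( T \) is the colimit of the diagram whose legs are the spans \( T_{v_i} \leftarrow v_i \rightarrow B_n \); the map \( v_i \to B_n \) hits the \( i \)-th leaf and the map \( v_i \to T_{v_i} \) hits the root of \( T_{v_i} \). This is precisely the diagram in the statement.

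A few checks make this precise. I must verify that the partial trees \( (\cdots((T^{v_1})^{v_2})\cdots)^{v_i} \) obtained by successive deletions are again planar rooted trees carrying the expected depth-first ordering, so that every intermediate object and morphism genuinely lives in \( \FPT \); this is routine, since deleting the material above a vertex leaves the relative order of the surviving vertices unchanged. I also want to confirm that deleting the material above \( v_1,\dots,v_{i-1} \) does not disturb the subtree above \( v_i \), which is immediate because these subtrees are pairwise disjoint. Granting these, the composition-of-pushouts argument identifies the colimit with \( T \).

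Alternatively, one could verify the universal property directly: a cocone under the diagram is a choice of morphisms \( T_{v_i} \to U \) together with a morphism \( B_n \to U \) agreeing on each \( v_i \), and, just as in the proof of Lemma \ref{lem:building_objects_from_colimits}, such data is the same as a single morphism \( T \to U \), since every edge of \( T \) lies either in some \( T_{v_i} \) or among the root edges forming \( B_n \). The main obstacle is the bookkeeping: ensuring the depth-first ordering is respected throughout the gluing, so that the reconstructed map \( T \to U \) is a legitimate \( \FPT \)-morphism rather than merely an embedding of the underlying rooted trees.
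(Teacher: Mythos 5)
Your proposal is correct and follows exactly the paper's route: the paper's entire proof is the one-line remark that the lemma ``follows by repeated application of Lemma \ref{lem:building_objects_from_colimits}'', which is precisely your iterated-pushout argument, with your bookkeeping (disjointness of the subtrees above the \( v_i \), the intermediate trees \( (\cdots(T^{v_1})\cdots)^{v_i} \) remaining in \( \FPT \), and the final corolla being \( B_n \)) supplying the details the paper leaves implicit. Your alternative direct verification of the universal property is also sound and mirrors the paper's own proof of Lemma \ref{lem:building_objects_from_colimits}.
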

\begin{proof}
  This follows by repeated application of Lemma \ref{lem:building_objects_from_colimits}.
\end{proof}
\begin{Lemma} \label{lem:maps_from_Bn}
We have a natural isomorphism
\[
\Mor_{\FPT}(B_n,T) = \left\{ \parbox{7cm}{distinct vertices \(
    v,v_1,\dots,v_n \in T\) such that \(v_i \leq v \) in the tree
    order, the \( v_i \) are pairwise incomparable in the tree order
    and \( v_1 \triangleleft v_2 \triangleleft \dots \triangleleft v_n \) in the depth-first order} \right\}
\]
\end{Lemma}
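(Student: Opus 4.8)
The plan is to establish the claimed bijection by directly unpacking what a morphism $B_n \to T$ must be in the category $\FPT$. A morphism is an order embedding (with respect to the tree order) that also preserves the depth-first ordering. The tree $B_n$ consists of a root $*$ together with $n$ leaves $1, \dots, n$, each connected to the root, and in the depth-first ordering we have $1 \triangleleft 2 \triangleleft \dots \triangleleft n$. So a morphism $\phi \colon B_n \to T$ is determined by where it sends the root and the $n$ leaves; set $v = \phi(*)$ and $v_i = \phi(i)$.

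First I would record the constraints forced by the definition of $\FPT$-morphism. Since $\phi$ is injective on vertices, the vertices $v, v_1, \dots, v_n$ are distinct. Since each $i \leq *$ in the tree order of $B_n$ and $\phi$ is an order embedding, we get $v_i \leq v$ in the tree order of $T$. Since the leaves $i$ and $j$ are pairwise incomparable in $B_n$ (neither lies on a downward path to the other), the order embedding property forces $v_i$ and $v_j$ to be pairwise incomparable in $T$ as well. Finally, preservation of the depth-first ordering gives $v_1 \triangleleft v_2 \triangleleft \dots \triangleleft v_n$. This shows the forward map, sending $\phi$ to the tuple $(v, v_1, \dots, v_n)$, lands in the indicated set.

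For the reverse direction I would take any tuple $(v, v_1, \dots, v_n)$ satisfying the stated conditions and define $\phi$ on vertices by $* \mapsto v$ and $i \mapsto v_i$. The point is to check this assignment actually \emph{is} an $\FPT$-morphism, i.e.\ that it extends to edges and respects both orderings. Each edge $(i, *)$ of $B_n$ should map to the downward path from $v_i$ to $v$, which exists precisely because $v_i \leq v$; this is the mechanism highlighted in the proof of Lemma~\ref{lem:building_objects_from_colimits}, where morphisms are described by sending edges to paths. The order-embedding condition on the finite partial order $\{*, 1, \dots, n\}$ reduces to exactly the two relations already assumed, namely $v_i \leq v$ and pairwise incomparability of the $v_i$, and depth-first preservation is exactly the assumed chain $v_1 \triangleleft \dots \triangleleft v_n$. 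Hence $\phi$ is a well-defined morphism, and the two assignments are mutually inverse.

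The main subtlety, rather than any single hard obstacle, is to make sure the list of conditions in the target set is \emph{exactly} equivalent to being an order embedding preserving the depth-first order, with nothing missing and nothing redundant. In particular I would verify that incomparability of the images is genuinely needed (an order embedding must reflect non-relations as well as relations, so comparable images would violate the embedding property) and that no further independent condition is imposed by the structure of $B_n$. Naturality of the isomorphism, finally, is the routine observation that a morphism $T \to T'$ in $\FPT$ postcomposes with $\phi$ to carry the tuple $(v, v_1, \dots, v_n)$ to its image, compatibly with the bijection on both sides; this requires no computation beyond noting that all three conditions (comparability, incomparability, depth-first order) are preserved by $\FPT$-morphisms, which is precisely what the forward direction already established.
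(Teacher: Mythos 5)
Your proposal is correct; in fact the paper states this lemma without any proof at all, treating it as immediate from the definitions, and your direct unpacking of what an \( \FPT \)-morphism out of \( B_n \) must be is precisely that routine verification. You also correctly handle the two points worth making explicit: that an order embedding must \emph{reflect} relations (so incomparability of the \( v_i \) is forced, and distinctness rules out \( v \leq v_i \)), and that the depth-first position of \( v \) relative to the \( v_i \) is automatic from the tree-order condition, so the chain \( v_1 \triangleleft \dots \triangleleft v_n \) is the only depth-first constraint needed.
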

Let \( T \) be a planar rooted tree. Define \( \PT_T \) to be the set of morphisms in \( \PT \) with domain \( T \). If \( f,g \in \PT_T \), we define \( f \leq g \) if there is a commutative diagram
\[
\begin{tikzcd}
T \arrow{r}{f} \arrow{rd}{g} & U \arrow{d} \\
& V
\end{tikzcd}
\]
in the category \( \PT \). 
Equivalently, \( f \leq g \) if there 
is a morphism \( h \) such that \( g = hf \). This is called the divisibility quasi-order on \( \PT_T \). Now we can state the relative version of Kruskal's tree Theorem:
\begin{Theorem} \label{thm:relative_kruskal}
The quasi-order on \( \PT_T \) is a well-quasi-order.
\end{Theorem}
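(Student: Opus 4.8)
The plan is to reformulate the divisibility order as a labeled-tree embedding order and then invoke the ordered, vertex-labeled form of Kruskal's tree theorem.

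First I would identify an element $f \in \PT_T$, i.e. a morphism $f \colon T \to U$, with the pair $(U,f)$, and encode this pair as a vertex-labeled planar rooted tree: label a vertex $u$ of $U$ by $v$ if $u = f(v)$ for some (necessarily unique) vertex $v$ of $T$, and by a fresh symbol $\bullet$ otherwise. The label set is the finite poset $Q = V(T) \sqcup \{\bullet\}$, in which the elements of $V(T)$ are pairwise incomparable, $\bullet$ is incomparable to each of them, and $q \le q$ for all $q$. Since $Q$ is finite it is a well-quasi-order. Because $f$ preserves the root, the vertex labeled by the root of $T$ is the root of $U$, so the encoding lands in the class of labeled planar rooted trees, and it gives an injection from $\PT_T$ into the set of $Q$-labeled planar rooted trees.

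Second I would check that this injection is an order isomorphism onto its image, where the target carries the order given by label-compatible $\PT$-morphisms $h$, i.e. those satisfying $q(u) \le q(h(u))$ for every vertex $u$. In one direction, a $\PT_T$-relation $g = hf$ forces $h(f(v)) = g(v)$, so $h$ carries each $v$-labeled vertex to the $v$-labeled vertex; and since $h$ is injective, no $\bullet$-labeled vertex can be sent to a marked vertex, so $h$ is label-compatible. Conversely, a label-compatible embedding $h$ sends each $v$-labeled vertex of $U$ to the unique $v$-labeled vertex of the target, hence satisfies $hf = g$ and is a genuine $\PT_T$-relation. The one point requiring care here is the homeomorphic (edge-to-path) nature of the morphisms: I must confirm that the internal vertices of the subdivided paths, which are not in the image of $h$ as nodes, never coincide with the marked vertices, so that the two orders agree exactly rather than merely up to spurious comparabilities.

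Finally, the problem reduces to showing that $Q$-labeled planar rooted trees are well-quasi-ordered under label-compatible homeomorphic embedding, since any sub-quasi-order of a well-quasi-order is again one, and $\PT_T$ embeds as such a sub-quasi-order. This is the ordered, labeled version of Kruskal's tree theorem, and proving it is where I expect the real work to be. I would prove it by a Nash--Williams minimal bad sequence argument: assuming a bad sequence of labeled trees exists, take one of minimal size, cut each tree at its root (as in Lemma \ref{lem:breaking_a_tree_up_at_the_root}) into the linearly ordered list of its top branches, and use minimality to conclude that the branches occurring form a well-quasi-order. The key simplification in the planar setting is that the depth-first order makes the children at each vertex a sequence rather than a multiset, so the combinatorial engine is Higman's lemma for finite sequences; combining this with minimality produces two comparable terms and the desired contradiction. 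The main obstacle is therefore twofold: the exact matching of the divisibility and labeled-embedding orders in the presence of subdivision vertices, and the bookkeeping in the minimal bad sequence argument needed to track the labels, equivalently the partial copy of $T$, as the trees are decomposed at their roots.
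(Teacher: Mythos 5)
Your encoding step is correct and is genuinely different from the paper's route: the paper never passes to labeled trees, but instead runs a two-stage induction (first \( T = B_n \) by induction on \( n \) with a case analysis on where the marked vertices sit, then general \( T \) by induction on the number of vertices via the \( T_v, T^v \) decomposition). Your second paragraph also checks out: since the marked vertices of \( V \) are exactly the image of \( g = hf \), they all lie in the vertex image of \( h \), so subdivision vertices are never marked, and the divisibility order really is the restriction of the label-compatible order. The problem is in your final step, and it is not mere bookkeeping: with the order on \( Q \)-labeled trees defined via label-compatible \( \PT \)-morphisms --- which by definition preserve the root --- the minimal bad sequence argument breaks exactly at ``use minimality to conclude that the branches occurring form a well-quasi-order.'' To contradict minimality of \( (T_n) \) you must convert a good pair \( T_i \leq U_j \) (with \( U_j \) a branch of \( T_{n(j)} \)) into \( T_i \leq T_{n(j)} \), and for that you need the branch to embed into its own tree \emph{in your order}: \( U_j \leq T_{n(j)} \). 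The inclusion of a branch does not preserve the root, so it is not a morphism in your order. The paper's fix is to remap the branch root to the tree root (the picture in Lemma \ref{lem:T=single_vertex}); but in your labeled setting this remapped map must also be label-compatible at the root, i.e.\ you need \( \mathrm{label}(\mathrm{root}(U_j)) \leq \mathrm{label}(\mathrm{root}(T_{n(j)})) \) in \( Q \), which fails whenever the branch root is marked (or labeled \( \bullet \)) since you made \( Q \) an antichain. This tension between root-preservation and root-decomposition is precisely what the paper's case analysis in Lemmas \ref{lem:T=single_vertex} and \ref{lem:T=Bn} (``is the root of \( U_{n_k} \) distinguished or not?'') is built to handle, and your sketch has no substitute for it.

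The good news is that your setup admits a clean repair, which would make your proof complete and arguably tidier than the paper's: drop root-preservation from the order on labeled trees. That is, order \( Q \)-labeled planar rooted trees by label-compatible order embeddings that preserve only the depth-first ordering. On the image of your encoding nothing is lost: the label \( \mathrm{root}(T) \) occurs exactly once in each encoded tree, namely at the root, so any label-compatible embedding automatically sends root to root and is therefore a \( \PT \)-morphism; your order-isomorphism onto the image survives verbatim. With root-preservation removed, the branch inclusion \( U_j \hookrightarrow T_{n(j)} \) \emph{is} a morphism of the labeled order (labels are preserved on the nose), the minimality step goes through, and Higman's lemma applied to the pairs (root label, word of branches) finishes the Nash--Williams argument; alternatively, at this point you could simply cite Kruskal's theorem for ordered (structured) labeled trees. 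As written, though, the proposal asserts the crucial branch step without an argument, and with your root-preserving order that step is false, so the proof has a genuine gap.
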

The \( T = \bullet \) case is very 
similar to Kruskal's tree Theorem. Indeed, Lemma \ref{lem:T=single_vertex} is proved by Draisma in \cite{jD14}. We include a proof to establish notation and demonstrate the main proof technique in the easiest case.
\begin{Lemma} \label{lem:T=single_vertex}
Theorem \ref{thm:relative_kruskal} is true when \( T = \bullet \).
\end{Lemma}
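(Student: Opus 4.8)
The plan is to run the Nash--Williams minimal bad sequence argument, with Higman's lemma as the inductive engine; this is the template that will reappear, in relativized form, in the general case. First I would unwind the statement. A \( \PT \)-morphism \( \bullet \to U \) must carry the single vertex of \( \bullet \) to the root of \( U \), so it is uniquely determined by \( U \); hence \( \PT_\bullet \) is canonically the collection of planar rooted trees, and under this identification \( f \le g \) holds exactly when there is a \( \PT \)-morphism between the codomains. So the lemma is the assertion that planar rooted trees are well-quasi-ordered under root- and depth-first-preserving order embeddings, i.e.\ a planar, rooted form of Kruskal's tree Theorem.

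Suppose the conclusion fails. Following Nash--Williams, I would build a \emph{minimal bad} sequence \( (T_i)_{i \ge 1} \): having fixed \( T_1, \dots, T_{i-1} \), choose \( T_i \) with the fewest vertices among all trees extending \( T_1, \dots, T_{i-1} \) to the start of a bad sequence (one with no \( a < b \) satisfying \( T_a \le T_b \)). No \( T_i \) can be the single vertex \( \bullet \), for then \( T_i \le T_j \) would hold for every later \( j \); so each root has children \( v^{(i)}_1 \triangleleft \dots \triangleleft v^{(i)}_{n_i} \), with associated child subtrees \( S^{(i)}_k = (T_i)_{v^{(i)}_k} \). By Lemma \ref{lem:breaking_a_tree_up_at_the_root}, \( T_i \) is the colimit of \( S^{(i)}_1, \dots, S^{(i)}_{n_i} \) and \( B_{n_i} \) glued along their roots.

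Let \( \mathcal{S} \) be the set of all child subtrees occurring this way. The crux---and the step I expect to be the main obstacle---is to show \( \mathcal{S} \) is itself a well-quasi-order. I would argue by contradiction: among all bad sequences in \( \mathcal{S} \), take one \( (s_m) \) whose first term \( s_1 \) is a child subtree of \( T_p \) with \( p \) as small as possible. Minimality of \( p \) forces every \( s_m \) to be a child subtree of some \( T_c \) with \( c \ge p \), because the tail \( (s_m, s_{m+1}, \dots) \) is again bad. Then \( T_1, \dots, T_{p-1}, s_1, s_2, \dots \) is bad: an inequality \( T_a \le s_m \) with \( a < p \) would compose with the embedding \( s_m \hookrightarrow T_c \) (\( c \ge p > a \)) to give \( T_a \le T_c \), and the other cases are immediate. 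Since \( s_1 \) has strictly fewer vertices than \( T_p \), this contradicts the minimal choice of \( T_p \). Hence \( \mathcal{S} \) is well-quasi-ordered.

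Now each \( T_i \) is recorded by the finite word \( (S^{(i)}_1, \dots, S^{(i)}_{n_i}) \) over \( \mathcal{S} \). By Higman's lemma, words over the well-quasi-order \( \mathcal{S} \) are well-quasi-ordered, so there are \( i < j \) and a strictly increasing injection \( \iota \colon \{1, \dots, n_i\} \to \{1, \dots, n_j\} \) with \( S^{(i)}_k \le S^{(j)}_{\iota(k)} \) for all \( k \). To finish I would assemble a morphism \( T_i \to T_j \) from the colimit presentation of \( T_i \) in Lemma \ref{lem:breaking_a_tree_up_at_the_root}: map each \( S^{(i)}_k \) by the chosen embedding into \( S^{(j)}_{\iota(k)} \), and map \( B_{n_i} \) by the element of \( \Mor_{\FPT}(B_{n_i}, T_j) \) that Lemma \ref{lem:maps_from_Bn} attaches to the root of \( T_j \) together with the roots of \( S^{(j)}_{\iota(1)}, \dots, S^{(j)}_{\iota(n_i)} \)---these are pairwise incomparable and, since \( \iota \) is increasing, listed in depth-first order. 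The pieces agree on the glued roots, so the colimit yields a \( \PT \)-morphism \( T_i \to T_j \), giving \( T_i \le T_j \) and contradicting badness. Apart from the well-quasi-orderedness of \( \mathcal{S} \), the remaining verifications are routine, since Lemma \ref{lem:breaking_a_tree_up_at_the_root} packages exactly the gluing data needed for the lift.
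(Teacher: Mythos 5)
Your proposal follows essentially the same route as the paper's proof: a Nash--Williams minimal bad sequence, the claim that the set of child subtrees arising from it is well-quasi-ordered (proved by playing a smaller tree against the minimality of the bad sequence), and then Higman's lemma applied to the words of child subtrees. The differences are organizational: the paper shows every sequence in the set \( A = \cup_n A_n \) has a good pair, while you show no bad sequence in \( \mathcal{S} \) exists by minimizing the index of its first term; and your final assembly of \( T_i \to T_j \) via Lemmas \ref{lem:breaking_a_tree_up_at_the_root} and \ref{lem:maps_from_Bn} spells out what the paper compresses into ``this gives us \( T_i \leq T_j \).'' Both of those variants are fine.

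There is, however, one step where your argument as written is not justified, and it is precisely the point the paper takes care to address. You claim that \( T_a \le s_m \) ``composes with the embedding \( s_m \hookrightarrow T_c \)'' to give \( T_a \le T_c \). That embedding is a morphism of \( \FPT \), not of \( \PT \): it sends the root of \( s_m \) to a depth-one vertex of \( T_c \). Hence the composite \( T_a \to T_c \) does not preserve the root, so it does not witness \( T_a \le T_c \) in the divisibility order on \( \PT_{\bullet} \), which requires a root-preserving morphism. The repair is the trick the paper illustrates with a picture: because morphisms are order embeddings of posets rather than graph maps, you may redefine the composite on the root alone, sending \( \mathrm{root}(T_a) \) to \( \mathrm{root}(T_c) \). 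The modified map is still injective, still an order embedding (the root is the maximum of the tree order on both sides), and still preserves the depth-first ordering (the root is extremal in that ordering on both sides), so it is a legitimate \( \PT \)-morphism and the inequality \( T_a \le T_c \) follows. With that observation inserted, your proof is complete and coincides with the paper's.
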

\begin{proof}
We use the Nash-Williams theory of good/bad sequences that is explained in \cite[Chapter~12]{D}. Suppose that \( \PT_{\bullet} \) is not well-quasi-ordered. Given \( n \in \mathbb{N} \), assume inductively that we have chosen a sequence \( T_0,\dots,T_{n-1} \) of planar rooted trees such that some bad sequence of planar rooted trees starts with \( T_0,\dots,T_{n-1} \). Choose \( T_n \) with a minimal number of vertices such that some bad sequence starts \( T_0,T_1,\dots,T_n \). Then \( (T_n)_{n \in \mathbb{N}} \) is a bad sequence. We call \( (T_n) \) a minimal bad sequence. Let \( v_1,\dots,v_d \) be the vertices in \( T_n \) whose distance from the root is \( 1 \), ordered with respect to the depth-first ordering. Let \( A_n = T_{n,v_1} T_{n,v_2} \dots T_{n,v_n} \). If we think of each sequence \( A_n \) as a set, we can define \( A = \cup_n A_n \). We claim that \( A \) is well-quasi-ordered. Let \( (U_k) \) be a sequence in \( A \). Then \( U_k \in A_{n(k)} \), so we have a morphism \( U_k \to T_{n_k} \) in \( \FPT \). This morphism does not preserve the root, but we can modify what the morphism does on the root vertex in the following way:
\[
\begin{tikzpicture}[baseline={([yshift=-.5ex]current bounding box.center)},
    level 1/.style={sibling distance=1em},
    level 2/.style={sibling distance=1em}, level distance=0.4cm,
    level 3/.style={sibling distance=1.5em}, level distance=0.4cm
    ]
\coordinate (root) {} [fill] circle (2pt) [grow=up]
child {[fill] circle (2pt)}
child {[fill] circle (2pt)}
child{ [fill] circle (2pt)};
\end{tikzpicture}
\quad
\hookrightarrow
\quad
\begin{tikzpicture}[baseline={([yshift=-.5ex]current bounding box.center)},
    level 1/.style={sibling distance=1em},
    level 2/.style={sibling distance=1em}, level distance=0.4cm,
    level 3/.style={sibling distance=1.5em}, level distance=0.4cm
    ]
\coordinate (root) {} [fill] circle (2pt) [grow=up]
child {
         child {[fill] circle (2pt)}
         child {[fill] circle (2pt)}
         child{ [fill] circle (2pt)}
        };
\end{tikzpicture}
\]
This allows us to convert \( U_k \to T_{n_k} \) into a morphism which witnesses \( U_k \leq T_{n_k} \). Choose \( p \) so that \( n(p) \) is the smallest element of \( \{ n(k) \} \). Then we have the following sequence \[ T_0,\dots,T_{n(p)-1},U_p,U_{p+1},\dots \] By the minimality of \( (T_n) \), it must have a good pair. If \( T_i \leq U_j \) then we have \( T_i \leq T_{n(j)} \). This is a contradiction because \( i < n(p) \leq n(j) \). Therefore there must be a good pair in \( (U_k) \). Since our choice of sequence in \( A \) was arbitrary, it follows that \( A \) is well-quasi-ordered. Consider the following sequence of words in \( A \):
\[ A_0,A_1,A_2,\dots \]
By Higman's lemma, we must have \( A_i \leq A_j \) for some \( i < j \). What this means is that there is an order preserving injection \( \phi : A_i \to A_j \) such that \( U \leq \phi(U) \) for each \( U \in A_i \). This gives us \( T_i \leq T_j \) which is a contradiction.
\end{proof}

\begin{Lemma} \label{lem:T=Bn}
Theorem \ref{thm:relative_kruskal} is true when \( T = B_n \).
\end{Lemma}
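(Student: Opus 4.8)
The plan is to argue by induction on \( n \), mimicking the structure of the proof of Lemma \ref{lem:T=single_vertex}. The base case is \( n = 0 \): since \( B_0 = \bullet \), it is precisely Lemma \ref{lem:T=single_vertex}. For the inductive step I assume Theorem \ref{thm:relative_kruskal} for \( B_0, \dots, B_{n-1} \) and suppose, for contradiction, that \( \PT_{B_n} \) is not well-quasi-ordered. Using the Nash--Williams machinery I fix a \emph{minimal} bad sequence \( (\mathcal{T}_k) \) in \( \PT_{B_n} \), where each \( \mathcal{T}_k \) is, by Lemma \ref{lem:maps_from_Bn}, a planar rooted tree \( T_k \) carrying \( n \) pairwise incomparable marked vertices \( v^k_1 \triangleleft \dots \triangleleft v^k_n \), and where \( T_k \) is chosen with the fewest vertices that prolongs the bad sequence.

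Next I would break each \( \mathcal{T}_k \) apart at its root. By Lemma \ref{lem:breaking_a_tree_up_at_the_root}, \( T_k \) is a colimit of the subtrees \( T_{k, c_1}, \dots, T_{k, c_m} \) sitting above its distance-\( 1 \) vertices \( c_1 \triangleleft \dots \triangleleft c_m \). Since the marks are pairwise incomparable, each mark lies in exactly one child subtree, and the marks inside a fixed child subtree occupy a consecutive block of the depth-first order. Thus \( \mathcal{T}_k \) is encoded by the word \( W_k = (P^k_1, \dots, P^k_m) \) whose letters are the child subtrees together with the marks they carry. Every letter is of exactly one of three kinds: a subtree with no marks; a subtree whose own root is its unique mark; or a subtree carrying \( r \) marks with \( 1 \le r \le n \), all lying strictly above its root.

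I would then prove that the alphabet \( A \) of all letters that occur is well-quasi-ordered under the divisibility order. A letter with no marks, and a letter whose root is its unique mark, are both objects of \( \PT_\bullet \) (in the second case the mark, being the root, is automatically preserved), so these letters are well-quasi-ordered by Lemma \ref{lem:T=single_vertex}. A letter carrying \( r < n \) marks strictly above its root is an object of \( \PT_{B_r} \), well-quasi-ordered by the inductive hypothesis. The only remaining letters carry all \( n \) marks strictly above their root. Such a letter \( P \) is a child subtree of the tree \( \mathcal{T} \) it came from, and by lowering its root down onto the root of \( \mathcal{T} \)---the same root modification used in the proof of Lemma \ref{lem:T=single_vertex}, which fixes the marks precisely because they lie strictly above the root---one obtains a witness that \( P \le \mathcal{T} \) in \( \PT_{B_n} \). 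As \( P \) has strictly fewer vertices than \( \mathcal{T} \), the minimality extraction argument of Lemma \ref{lem:T=single_vertex} shows these letters too are well-quasi-ordered. A finite union of well-quasi-ordered sets being well-quasi-ordered, \( A \) is well-quasi-ordered.

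Applying Higman's lemma to the sequence of words \( W_0, W_1, \dots \) then yields indices \( i < j \) and an increasing injection matching each letter of \( W_i \) to a letter of \( W_j \) that dominates it. The final step is to reassemble this data into a single \( \PT \)-morphism \( \mathcal{T}_i \to \mathcal{T}_j \), sending root to root and each child subtree of \( \mathcal{T}_i \) into its matched child subtree of \( \mathcal{T}_j \) via the given witness; this shows \( \mathcal{T}_i \le \mathcal{T}_j \) and contradicts badness. I expect this reassembly to be the main obstacle: one must check that gluing the per-letter embeddings along the shared root produces an honest order embedding preserving the \emph{global} depth-first order, and that the marks \( v^i_1 \triangleleft \dots \triangleleft v^i_n \) are carried to \( v^j_1 \triangleleft \dots \triangleleft v^j_n \) in the correct order. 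This should follow because the matching injection is increasing, so distinct child subtrees go to distinct child subtrees compatibly with the depth-first order, and because matched letters carry equally sized, correspondingly ordered blocks of marks. A secondary point needing explicit care is the bookkeeping for marks at distance-\( 1 \) vertices: such a mark is the root of its letter, so that letter must be grouped with the \( \PT_\bullet \) letters rather than fed to the root-lowering modification, which would displace the mark.
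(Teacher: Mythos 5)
Your proof is correct, and it rests on the same pillars as the paper's: induction on \( n \), a Nash--Williams minimal bad sequence, decomposition of each tree at its root into child subtrees, the root-lowering trick to witness that a child subtree carrying all \( n \) marks divides its ambient tree in \( \PT_{B_n} \), and Higman's lemma. But the way you combine these pillars differs enough from the paper to be worth comparing. The paper first applies the pigeonhole principle to the sequence of mark-count patterns \( \omega(A_k) \) to pass to a subsequence with one fixed pattern \( m_1,\dots,m_d \), then finishes with a product-of-well-quasi-orders argument split into the cases \( d=1 \) and \( d>1 \), invoking Higman's lemma only for the unmarked stretches between marked letters. You instead enlarge the Higman alphabet to contain the marked letters themselves, partitioned into finitely many mutually incomparable classes (unmarked, root-marked, and \( r \) marks strictly above the root for \( 1 \le r \le n \)), prove each class is well-quasi-ordered by exactly the paper's three mechanisms (Lemma \ref{lem:T=single_vertex}, the induction hypothesis, and minimality with root-lowering, respectively), and apply Higman's lemma once to the whole words. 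Your reassembly step does go through: since distinct classes are incomparable, a good pair of words matches marked letters to marked letters of the same kind, and because the mark counts on both sides sum to \( n \), the matching must exhaust the marked letters of the larger word, so the increasing injection carries the \( n \) marks bijectively and in depth-first order. What your organization buys: the base case \( n=0 \) is literally Lemma \ref{lem:T=single_vertex} rather than the paper's separate \( n=1 \) argument; the pigeonhole step disappears; and the bookkeeping for a mark sitting at a distance-\( 1 \) vertex, which you correctly force into the root-marked \( \PT_\bullet \) class rather than the root-lowering argument, is made explicit---the paper's corresponding sub-case in item (1) (``the root of \( U_{n_k} \) is distinguished'') is in fact vacuous for \( n \ge 2 \), since such a root mark would be comparable to the remaining marks, so your treatment is the more careful one.
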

\begin{proof}
The proof is by induction on \( n \). The base case is \( n = 1 \). Elements in \( \PT_{B_1} \) are planar rooted trees with a distinguished non-root vertex and \( T \leq U \) if there is a morphism \( T \to U \) preserving the root and the distinguished non-root vertex. Choose a minimal bad sequence \( (T_n) \) in \( \PT_{B_1} \). Define \( A_n \) as in Lemma \ref{lem:T=single_vertex}. We can break the sequence \( A_n \) up as \( B_n U_n C_n \) where \( U_n \) is the tree containing the distinguished vertex, \( B_n \) is the sequence of trees coming before \( U_n \) and \( C_n \) is the sequence of trees coming after \( U_n \) in the depth first ordering. There are two cases we need to consider:
\begin{enumerate}
\item Firstly, suppose that for an infinite subsequence \( (U_{n_k}) \) of \( (U_n) \), the distinguished vertex in \( T_{n_k} \) is the root of \( U_{n_k} \). Consider the following sequence
\[ (B_{n_1},U_{n_1},C_{n_1}), (B_{n_2},U_{n_2},C_{n_2}), \dots \]
A product of well-quasi-orders is a well quasi-order. By lemma \ref{lem:T=single_vertex}, there must be a good pair \( (B_{n_i},U_{n_i},C_{n_i}) \leq (B_{n_j},U_{n_j},C_{n_j}) \) which gives us \( T_{n_i} \leq T_{n_j} \) in \( \PT_{B_1} \). This is a contradiction.
\item Secondly, suppose that for an infinite subsequence \( (U_{n_k}) \) of \( (U_n) \), the distinguished vertex in \( T_{n_k} \) is not the root of \( U_{n_k} \). The obvious morphism \( U_{n_k} \to T_{n_k} \) does not preserve roots, but we can use the same trick as in lemma \ref{lem:T=single_vertex} to get \( U_{n_k} \leq T_{n_k} \) in \( \PT_{B_1} \). Since we started with a minimal bad sequence, \( \{ U_{n_k} \} \) must be well-quasi-ordered, therefore the sequence
\[ (B_{n_1},U_{n_1},C_{n_1}), (B_{n_2},U_{n_2},C_{n_2}), \dots \]
must have a good pair \( (B_{n_i},U_{n_i},C_{n_i}) \leq (B_{n_j},U_{n_j},C_{n_j}) \) which gives us \( T_{n_i} \leq T_{n_j} \) in \( \PT_{B_1} \). This is a contradiction.
\end{enumerate}
One of these two cases must occur. Therefore we have proved that \( \PT_{B_1} \) is well-quasi-ordered. Now assume that \( \PT_{B_i} \) is well-quasi-ordered for \( i < n \). We prove that \( \PT_{B_n} \) is well-quasi-ordered. Elements of \( \PT_{B_n} \) are planar rooted trees with \( n \) distinguished non--root vertices \( v_1,\dots,v_n \) that are incomparable in the tree order and ordered in the depth-first order. We have \( T \leq U \) if there is a morphism \( T \to U \) in \( \FPT \) that preserves the root and the distinguished non root vertices. Assume that \( (T_n) \) is a minimal bad sequence in \( \PT_{B_n} \). As usual, form the sequence \( (A_n) \). Define \( \omega(A_n) \) as follows: replace each tree in \( A_n \) with the number of distinguished vertices of \( T_n \) it contains, then delete the zeros. By the pigeonhole principle
\[ \omega(A_1),\omega(A_2),\omega(A_3),\dots \]
must contain some sequence \( 
m_1,\dots,m_d \) an infinite number of times. Let \( (T_{n_k}) \) be the corresponding subsequence of \( (T_n) \). We must now consider two cases:
\begin{enumerate}
\item Suppose \( d = 1 \). Write \( A_{n_k} = B_{n_k} U_{n_k} C_{n_k} \) where \( U_{n_k} \) contains all of the distinguished vertices in \( T_{n_k} \). If there is an infinite subsequence where the root of \( U_{n_k} \) is not distinguished, then use a minimal bad sequence argument to get a contradiction. If there is an infinite subsequence where the root of \( U_{n_k} \) is distinguished, then use the induction hypothesis to get a contradiction.
\item If \( d > 1 \) then write
 \[ A_{n_k} = B_{n_k}^0 U_{n_k}^{1} 
B_{n_k}^1 \dots U_{n_k}^d B_{n_k}^d \]
where \( U_{n_k}^i \) has \( m_i \) of the distinguished vertices. Now use the induction hypothesis to get a contradiction.
\end{enumerate}
\end{proof}

\begin{proof}[Proof of Theorem \ref{thm:relative_kruskal}]
We induct on the number of vertices in 
\( T \). Lemma \ref{lem:T=Bn} is the base case. Choose a non--root vertex \( v \) in \( T \) that has valence \( \geq 2 \). Choose a sequence \( (\phi_n : T \to U_n) \) in \( \PT_T \). Then we get sequences \( \phi_{n,v} : T_v \to U_{n,\phi_n(v)} \) and \( \phi_n^v : T^v \to U_{n}^{\phi_n(v)} \) in \( \PT_{T_v} \) and \( \PT_{T^v} \) respectively. By induction, there must be a good pair \( (\phi_{i,v},\phi_{i}^v) \leq (\phi_{j,v},\phi_{j}^v) \). This induces \( \phi_i \leq \phi_j \) which completes the proof.
\end{proof}

\section{Proof of theorem \ref{thm:main_theorem}} \label{sec:proof_of_main_theorem}

In this section, we prove that \( \PT 
\) is a Gr\"{o}bner category and that 
the forgetful functor \( \PT \to \TT 
\) has property (F) and is essentially 
surjective. First, let us recall the 
definition of a Gr\"{o}bner category 
from \cite{SS2014}. Let \( \CC \) be a 
small directed category. Write \( \CC_x 
= \bigcup_{y} \Mor_{\CC}(x,y) \). If 
\( f : x \to y  \) and \( g : x \to z 
\) are elements of \( \CC_x \) then we 
write \( f \leq g \) if there is a 
commutative triangle
\[
\begin{tikzcd}
x \arrow{r}{f} \arrow{dr}[swap]{g} & y 
\arrow{d} \\
& z
\end{tikzcd}
\]
We call this quasi-order on \( \CC_x 
\) the {\bf divisibility order}. It is 
intrinsic to \( \CC \). An {\bf 
admissible order} on \( \CC_x \) is a 
well-order \( \preceq \) such that if 
\( f \preceq f' \) then \( gf \preceq 
gf' \) whenever this makes sense. 
Admissible orders are not intrinsic to 
\( \CC \): they are extra structure.
\begin{Definition} \label{def:grobner_category}
We call \( \CC \) {\bf Gr\"{o}bner} if each divisibility order \( \CC_x \) is a well-quasi-order and each \( \CC_x \) admits an admissible order.
\end{Definition}
Theorem \ref{thm:relative_kruskal} says 
that the divisibility order on \( 
\PT_T \) is a well-quasi-order. 
Therefore, to prove that \( \PT \) is 
Gr\"{o}bner, we need to construct 
admissible orders on each \( \PT_T \). 
Let \( T,U \) be planar rooted trees 
and choose a morphism \( \phi : T \to 
U \) in \( \PT \). If \( e \) is an 
edge in \( T \), label every edge in 
the path \( \phi(e) \) with the 
distance between \( {\rm target(e)} \) 
and \( {\rm root}(T) \) in \( T \). 
(edges point towards the root). Now we 
go on a clockwise depth-first tree 
walk  along \( U \) (depth-first tree 
walks are defined in \cite[chapter 
5]{Sv2}). As we are traveling, record 
the path as follows:
\begin{enumerate}
\item If we travel up an edge marked with an \( i \), write \( \up{i}
  \).
\item If we travel down an edge marked with an \( i \), write \(
  \down{i} \)
\item If we travel up an unmarked edge, write \( ( \)
\item If we travel down an unmarked edge, write \( ) \).
\end{enumerate}
The resulting string is called the {\bf Catalan word} of \( \phi
\).
\begin{Example} consider the map:
\[
\begin{tikzpicture}[baseline={([yshift=-.5ex]current bounding box.center)},
    level 1/.style={sibling distance=4em},
    level 2/.style={sibling distance=3em}, level distance=0.6cm,
    level 3/.style={sibling distance=1.5em}, level distance=0.6cm
    ]
\coordinate (root) {} [fill] circle (2pt) [grow=up]
child {[fill] circle (2pt)}
child {[fill] circle (2pt)
         child {[fill] circle (2pt)}
         };
\end{tikzpicture}
\quad
\hookrightarrow
\quad
\begin{tikzpicture}[baseline={([yshift=-.5ex]current bounding box.center)},
    level 1/.style={sibling distance=4em},
    level 2/.style={sibling distance=3em}, level distance=0.6cm,
    level 3/.style={sibling distance=1.5em}, level distance=0.6cm
    ]
\coordinate (root) {} [fill] circle (2pt) [grow=up]
child {
         child {[fill] circle (2pt)
                  child{
                          child
                          child
                          child
                          }
                  }
         child{ [fill] circle (2pt)
                  child
                  child {
                           child{ [fill] circle (2pt)
                                    child
                                    child
                                   }
                           }
                 }
        };
\end{tikzpicture}
\]
Its Catalan word is
\[
\up{0} \up{0} \up{1} \up{1} \up{} \down{} \up{} \down{} \down{1}
\down{1} \up{} \down{} \down{0} \up{0} \up{} \up{} \down{} \up{}
\down{} \up{} \down{} \down{} \down{0} \down{0}
\]
\end{Example}
If \( T = \bullet \) then we recover the the standard bijection between planar rooted trees and strings of balanced brackets which is described in \cite[chapter 5]{Sv2}.
\begin{Lemma} \label{lem:morphism_determined_by_Catalan_word}
The mapping \( \phi \mapsto \text{Catalan word} \) is injective.
\end{Lemma}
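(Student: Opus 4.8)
The plan is to prove injectivity by exhibiting a left inverse: I will recover a morphism \( \phi \) --- meaning its codomain \( U \), its domain \( T \), and the underlying map --- from its Catalan word \( W \). So suppose \( \phi : T \to U \) and \( \phi' : T' \to U' \) produce the same word \( W \); I will argue that the two must coincide.

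First I would recover \( U \). Replacing every symbol \( \up{i} \) and \( ( \) by an opening bracket and every \( \down{i} \) and \( ) \) by a closing bracket turns \( W \) into a word of balanced brackets, which is exactly the balanced-bracket word produced by the full clockwise depth-first walk of \( U \). By the standard bijection between planar rooted trees and balanced brackets recalled in \cite[chapter 5]{Sv2}, this word determines \( U \), and the same computation applied to \( \phi' \) shows \( U' = U \). Moreover the walk matches each pair of brackets with a unique edge of \( U \), so from \( W \) we read off exactly which edges of \( U \) are marked and with which label.

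Next I would locate the image. By construction the marked edges of \( U \) are precisely the edges lying on some path \( \phi(e) \), hence exactly the edges of the subtree \( \phi(T) \subseteq U \), and the label on a marked edge equals the tree-distance in \( T \) from the root to the target of the \( T \)-edge through which it came. The decisive observation is how to distinguish the genuine images \( \phi(v) \) of vertices of \( T \) from the interior subdivision points of the paths \( \phi(e) \). If \( v \) has depth \( d \) in \( T \), then every image edge entering \( \phi(v) \) from the root side is a piece of \( \phi \) applied to the edge from \( v \) to its parent, and so carries label \( d-1 \), whereas every image edge leaving \( \phi(v) \) toward the leaves is a piece of \( \phi \) applied to an edge from a child of \( v \) to \( v \), and so carries label \( d \). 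Thus at \( \phi(v) \) the label jumps by one, while at an interior subdivision point of a single path \( \phi(e) \) the incoming and outgoing labels agree, since all edges of \( \phi(e) \) share one label. Consequently the image vertices of \( T \) are exactly the root of \( U \), the leaves and branch points of the image subtree, together with those vertices at which the label increases --- all determined by \( W \) and \( U \).

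Having located the image vertices, I recover \( T \) as the planar rooted tree carried by these vertices, with the tree order and depth-first order inherited from \( U \) (legitimate because \( \phi \) is an order embedding preserving the depth-first order), and I recover \( \phi \) as the assignment sending each edge of \( T \) to the unique root-directed path in the image subtree joining consecutive image vertices. This reconstruction depends only on \( W \), so \( T' = T \) and \( \phi' = \phi \), which is the desired injectivity. I expect the main obstacle to be exactly the separation of genuine image vertices from subdivision points; once the label-jump criterion is established the rest is bookkeeping, and the delicate case of a vertex of \( T \) with a single child --- where \( \phi(v) \) is not a branch point --- is precisely the case the label-jump criterion is designed to handle.
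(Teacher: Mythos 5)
Your overall plan --- rebuild \( U \) from the bracket pattern, then locate the image vertices inside \( U \) and rebuild \( T \) and \( \phi \) from them --- is exactly the reconstruction that the paper sketches in two sentences, so the approach is right in spirit. But your criterion for recognizing the image vertices is wrong, and the proof breaks at precisely that point. You claim that every branch point of the image subtree is an image vertex \( \phi(v) \). This tacitly assumes that distinct edges of \( T \) map to edge-disjoint paths in \( U \), which is false: morphisms in \( \PT \) are order embeddings, and an order embedding need not send the least common ancestor of two vertices to the least common ancestor of their images, so two paths \( \phi(e_1) \), \( \phi(e_2) \) can share edges of \( U \). Concretely, let \( U \) be the planar rooted tree with root \( r \), a single child \( m \) of \( r \), and two children \( x_1, x_2 \) of \( m \), and let \( \phi : B_2 \to U \) send the root to \( r \) and the two leaves to \( x_1 \) and \( x_2 \); this is a morphism in \( \PT \) (compare Lemma \ref{lem:maps_from_Bn}). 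Both edges of \( B_2 \) have the root as target, so both image paths carry label \( 0 \), their union is all of \( U \), and the Catalan word is \( \up{0}\up{0}\down{0}\up{0}\down{0}\down{0} \). The vertex \( m \) is a branch point of the image subtree, there is no label jump at \( m \) (label \( 0 \) below it, label \( 0 \) above it), and yet \( m \) is \emph{not} in the image of \( \phi \). Your procedure declares \( m \) an image vertex and therefore reconstructs the identity morphism \( U \to U \), whose Catalan word is \( \up{0}\up{1}\down{1}\up{1}\down{1}\down{0} \) --- a different word. So the proposed decoding is not a left inverse of \( \phi \mapsto \text{Catalan word} \), and injectivity does not follow from your argument as written. (The lemma itself survives: the two competing morphisms above do have different words; it is only your decoder that confuses them.)

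The repair is to make the criterion purely label-theoretic and delete the branch-point clause: a non-root vertex \( u \) of \( U \) is an image vertex if and only if the edge directly below \( u \) is marked, say with label \( \ell \), and no marked edge directly above \( u \) carries the same label \( \ell \). Indeed, at an image vertex the marked edges above (if any exist) all carry \( \ell + 1 \), while at a non-image vertex of the image subtree --- including branch points created by overlapping paths --- every marked edge above carries \( \ell \), because every path through a non-image vertex passes strictly through it and all such paths share the unique edge below \( u \), hence share its label. To justify this dichotomy you need one observation you currently assert without proof: a path \( \phi(e') \), \( e' = (v', w') \), can never pass strictly through an image vertex \( \phi(v) \), since \( \phi(v') \leq \phi(v) \leq \phi(w') \) forces \( v' \leq v \leq w' \) by order-reflection, and an edge has no vertices strictly between its endpoints, so \( v \in \{v', w'\} \). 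With the corrected criterion, your remaining bookkeeping (inducing the tree order and depth-first order on the image vertices, and reading off \( \phi \) as the inclusion) goes through and yields the injectivity claimed, matching what the paper's terse proof intends by ``the bottom row of numbers tells us how the domain is mapped in.''
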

\begin{proof}
We can reproduce \( \phi \) from its Catalan word as follows. The top row of parentheses gives the target. The bottom row of numbers tells us how the domain is mapped in, and also gives the domain since all tree maps are fully faithful.
\end{proof}
We use Catalan words to equip each set 
\( \PT_T \) with an admissible order. Given a Catalan word, build the tuple \( (p,n) \) where \( p \) is the top row and \( n \) is the second row. We order the alphabets in the following way:
\begin{align*}
&) \prec ( \\
&- \prec 0 \prec 1 \prec 2 \prec 3 \prec \dots
\end{align*}
Order words in the parentheses alphabet using the length lexicographic ordering. Order words in \( \{ -,0,1,2,\dots \} \) using lexicographic ordering. Given two Catalan words \( (p,n), (p',n') \), define \( (p,n) \prec (p',n') \) if \( p \prec p' \) or \( p = p' \) and \( n \prec n' \).

\begin{Lemma} \label{lem:catalan_words_are_admissible}
Let \( f, g : T \to U \) be morphisms in \( \PT \) such that \( f \prec g \) with respect to the above Catalan word ordering. Let \( h : U \to V \) be a morphism. Then \( hf \prec hg \).
\end{Lemma}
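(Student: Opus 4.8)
The plan is to reduce the statement to a comparison of bottom rows, and then to transport the first point of difference from the walk on \( U \) to the walk on \( V \). The key initial observation is that the top row \( p \) of a Catalan word records only whether each step of the clockwise depth-first walk on the \emph{target} goes up or down (rules 1--4 all produce \( ( \) or \( ) \) regardless of the marking), so \( p \) depends only on the target tree and not on the morphism. In particular \( p_f = p_g \) because \( f \) and \( g \) share the target \( U \), and \( p_{hf} = p_{hg} \) because \( hf \) and \( hg \) share the target \( V \). By the definition of the Catalan word order, the hypothesis \( f \prec g \) together with \( p_f = p_g \) forces \( n_f \prec n_g \) in the lexicographic order on \( \{-,0,1,2,\dots\} \); and since \( p_{hf} = p_{hg} \), to conclude \( hf \prec hg \) it suffices to prove \( n_{hf} \prec n_{hg} \). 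Thus the whole statement reduces to monotonicity of the bottom row under post-composition.

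Next I would describe precisely how \( h \) transforms the bottom row. Since \( (hf)(e) = h(f(e)) \) for every edge \( e \) of \( T \), and since the label attached to an edge of the image is the distance in \( T \) from the root to the target of \( e \) --- a quantity involving neither \( U \) nor \( V \) --- the marked edges of \( V \) for \( hf \) are exactly the \( h \)-images of the marked edges of \( U \) for \( f \), carrying the same labels. Concretely, \( n_{hf} \) is read off the fixed depth-first walk of \( V \): a traversal of a \( V \)-edge lying on \( h(g) \) for a marked edge \( g \) of \( U \) records the \( f \)-label of \( g \), and every other traversal records the blank symbol \( - \). The decisive point is that the walk of \( V \), the set of edges of \( V \) outside the image subtree, and the assignment of each \( V \)-edge to the \( U \)-edge(s) whose \( h \)-image contains it are all determined by \( h \) alone; only the recorded label \emph{values} differ between \( hf \) and \( hg \). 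That these values are unambiguous even when two edge-images overlap is exactly the well-definedness underlying Lemma \ref{lem:morphism_determined_by_Catalan_word}, which follows from the observation that if two edges of the domain share an edge in their images then they must share a parent vertex, so their labels agree.

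Finally I would transport the first difference. Let \( t \) be the first position at which \( n_f \) and \( n_g \) differ. Because the two traversals of a single edge record the same label, \( t \) must be the up-traversal of some edge \( g_t \) of \( U \), with \( n_f[t] = \ell^f(g_t) \prec \ell^g(g_t) = n_g[t] \), while every edge whose up-traversal precedes \( t \) has equal \( f \)- and \( g \)-labels. Because \( h \) preserves the depth-first order on vertices, the walk of \( V \) visits the image subtree in an order refining the walk of \( U \): there is a monotone correspondence sending each \( V \)-traversal of an image edge to the corresponding \( U \)-traversal, with the extra edges interleaved as detours that record \( - \) in both words. Following the walk of \( V \), every traversal occurring before the first traversal of an edge on \( h(g_t) \) corresponds either to an extra edge (recording \( - \) in both words) or to an image edge whose \( U \)-edge is discovered no later than \( g_t \) and lies in the equal region; so \( n_{hf} \) and \( n_{hg} \) agree up to that point. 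At the first traversal of an edge on \( h(g_t) \) they disagree, with \( n_{hf} \) recording \( \ell^f(g_t) \prec \ell^g(g_t) \). Hence \( n_{hf} \prec n_{hg} \), and therefore \( hf \prec hg \).

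I expect the monotone correspondence to be the main obstacle. The morphism \( h \) may subdivide a single edge of \( U \) into a path in \( V \), may fail to preserve lowest common ancestors (so that the images of sibling edges share an initial segment near their common parent), and the extra subtrees of \( V \) are explored by detours interleaving with the image edges. Controlling this interleaving using only preservation of the depth-first order, and verifying that overlaps never manufacture a spurious label disagreement before the critical position, is the delicate step; once it is in place, the reduction to the bottom row and the transport of the first difference are routine.
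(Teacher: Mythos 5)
Your strategy is the same as the paper's own proof: reduce to the bottom rows (the top row depends only on the target tree), describe how \( h \) pushes the edge-marking of \( U \) forward to \( V \), and transport the first difference along the walk. The reduction and the pushforward description, including the well-definedness observation for overlapping edge-images, are correct, and you correctly identified the overlap phenomenon as the crux. But the transport step is not merely delicate --- it is false, for exactly the reason you flagged. The first-traversed edge of \( h(g_t) \) is the bottom edge of that chain, which is precisely the part liable to be shared with \( h(e'') \) for a sibling edge \( e'' \) of \( g_t \) (same target vertex in \( U \)). Such an \( e'' \) is first traversed in \( U \) strictly after \( g_t \), so it may lie outside the equal region; then what \( n_{hf} \) and \( n_{hg} \) record at that position are labels inherited from \( e'' \), not \( \ell^f(g_t) \) and \( \ell^g(g_t) \), and a disagreement occurring later in the walk on \( U \) leaks into an earlier position of the walk on \( V \), possibly reversing the comparison.

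Concretely: let \( T \) be the chain with root \( r \), child \( t \), grandchild \( s \); let \( U \) have root \( \rho \), child \( b \), children \( a_1 \triangleleft a_2 \) of \( b \), and child \( a_1' \) of \( a_1 \); let \( V \) have root \( \rho' \), child \( b' \), child \( c \) of \( b' \), children \( d_1 \triangleleft d_2 \) of \( c \), and child \( e_1 \) of \( d_1 \). Writing \( xy \) for the edge from \( x \) to \( y \), define morphisms of \( \PT \) (each is an order embedding preserving root and depth-first order) by
\[
f \colon (r,t,s) \mapsto (\rho, b, a_2), \qquad
g \colon (r,t,s) \mapsto (\rho, a_1, a_1'), \qquad
h \colon (\rho, b, a_1, a_1', a_2) \mapsto (\rho', b', d_1, e_1, d_2).
\]
The walk on \( U \) traverses \( b\rho,\ a_1b,\ a_1'a_1,\ a_1'a_1,\ a_1b,\ a_2b,\ a_2b,\ b\rho \), giving bottom rows
\[
n_f = (0,{-},{-},{-},{-},1,1,0), \qquad n_g = (0,0,1,1,0,{-},{-},0),
\]
so \( f \prec g \) (first difference at step \( 2 \), where \( {-} \prec 0 \)); here \( g_t = a_1b \). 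But \( h \) merges the two branches above \( b \): the edge \( cb' \) of \( V \) lies on both \( h(a_1b) \) and \( h(a_2b) \), so \( hf \) marks it with \( 1 \) (inherited from \( a_2b \), which is \( f \)-marked with label \( 1 \)) while \( hg \) marks it with \( 0 \) (inherited from \( a_1b \)). Along the walk on \( V \), namely \( b'\rho',\ cb',\ d_1c,\ e_1d_1,\ e_1d_1,\ d_1c,\ d_2c,\ d_2c,\ cb',\ b'\rho' \), the bottom rows are
\[
n_{hf} = (0,1,{-},{-},{-},{-},1,1,1,0), \qquad n_{hg} = (0,0,0,1,1,0,{-},{-},0,0),
\]
with first difference at step \( 2 \), where \( 0 \prec 1 \); hence \( hg \prec hf \). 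So at the first traversal of an edge on \( h(g_t) \) the words do disagree, but \( n_{hf} \) records \( 1 \) rather than \( \ell^f(g_t) = {-} \), and the inequality points the wrong way. This is not a patchable slip in your write-up: the example shows the conclusion of the lemma itself fails for this \( f,g,h \) with the order exactly as defined, so no amount of care in the transport step can close the gap. (The paper's own two-sentence proof passes over the same point; your proposal in effect makes its gap precise rather than closing it. Repairing the lemma requires modifying the admissible order or the labeling so that shared initial segments of edge-images cannot reverse a comparison.)
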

\begin{proof}
First we interpret \( f \prec g \). 
When we go on a clockwise depth-first 
tree walk along \( U \), the first time 
we notice a difference in the edge 
labeling, the label for \( g \) is 
larger than the label for \( f \). Now 
go on a clockwise depth-first tree walk 
along \( V \) labeled by \( hf \) and 
\( hg \). The first difference that we 
notice is going to be induced by the 
difference we noticed on our walk 
along \( U \) and the label for \( hg 
\) will be bigger than the label for 
\( hf \) because the labels are mapped 
from \( U \).
\end{proof}
This completes the construction of 
admissible orders on each \( \PT_T \). Therefore we have proved that \( \PT \) is Gr\"{o}bner. To conclude the proof of Theorem \ref{thm:main_theorem}, we need to prove that the forgetful functor \( i : \PT \to \TT \) is essentially surjective and has property (F). First we recall the definition of property (F).
\begin{Definition} \label{def:property_F}
Let \( i : \CC' \to \CC \) be a functor. We say that \( i \) has {\bf property~(F)} if for each principal projective \( P_x = \mathbb{C} \CC(x,-) \) in \( \Rep(\CC) \), the \( \CC' \)-representation \( i^* P_x  \) is finitely generated.
\end{Definition}
Let \( J : \PT \to \TT \) be the functor which forgets the plane ordering. Since every rooted tree can be drawn on the plane it follows that \( J \) is essentially surjective. Let \( U \) be a rooted tree and \( V \) a planar rooted tree. Then we have
\[ \TT(U,J(V)) = \PT(U_1,V) \sqcup \PT(U_2,V) \sqcup \dots \sqcup
\PT(U_e,V) \]
where \( U_1,\dots,U_e \) are all the planar representations of \( U \). This implies that
\[ J^* P_U  = \bigoplus_{i=1}^e P_{U_i} \]
which proves that \( J \) has property (F).

\bibliographystyle{hplain} 
\bibliography{bibliography}

\end{document}